\renewcommand{\pod}[1]{\mathchoice 
  {\allowbreak\if@display\mkern8mu\else\mkern8mu\fi(#1)}
  {\allowbreak\if@display\mkern8mu\else\mkern8mu\fi(#1)}
  {\mkern4mu (#1)}
  {\mkern4mu (#1)}
}
\newcommand{\abs}[1]{
  \ensuremath\lvert#1\rvert}
\newtheorem{lem}{Lemma}
\newtheorem{prop}{Proposition}
\newtheorem{thm}{Theorem}
\theoremstyle{remark}
\newtheorem*{rem*}{Remark}
\numberwithin{equation}{section}
\title{Large Gaps between Primes in Arithmetic Progressions}
\author{Deniz A. Kaptan}
\thanks{Author supported by MTA grant PPD-002/2016}
\address{MTA Alfr\'ed R\'enyi Institute of Mathematics, 13--15 Re\'altanoda u.
1053 Budapest, Hungary}
\email{kaptan@renyi.hu}
\keywords{Distribution of primes, Primes in progressions}
\subjclass[2010]{11N05, 11N13}
\begin{document}

\begin{abstract}
  For $(M,a)=1$, put
\begin{equation*}
  G(X;M,a)=\sup_{p^\prime_n\leq X}(p^\prime_{n+1}-p^\prime_n),
\end{equation*}
where $p^\prime_n$ denotes the $n$-th prime that is congruent to $a\pmod{M}$.
We show that for any positive $C$, provided $X$ is large enough in terms of
$C$, there holds
\begin{equation*}
  G(MX;M,a)\geq(C+o(1))\varphi(M)\frac{\log X\log_2 X\log_4 X}
  {{(\log_3 X)}^2},
\end{equation*}
uniformly for all $M\leq\kappa{(\log X)}^{1/5}$ that satisfy
\begin{equation*}
  \omega(M)\leq \exp\biggl(\frac{\log_2 M\log_4 M}{\log_3 M}\biggr).
\end{equation*}
\end{abstract}

\maketitle

\section{Introduction}
Denote by
\begin{equation}
  G(X)=\sup_{p_n\leq X}(p_{n+1}-p_n)
\end{equation}
the largest gap between consecutive primes up to $X$. The study of how large
$G(X)$ can be has a long history. Westzynthius~\cite{Westzynthius1931Uber}
was the first to show that $G(X)$ can be arbitrarily large compared to the
average gap $(1+o(1))\log X$. Erd\H{o}s~\cite{Erdos1940Difference} and
Rankin~\cite{Rankin1938Difference} showed
\begin{equation}\label{eq:G}
  G(X)\geq(c+o(1))\frac{\log X\log_2 X\log_4 X}{{(\log_3 X)}^2}
\end{equation}
for some positive constant $c$, where $\log_\nu$ denotes the $\nu$-fold
iterated logarithm. Subsequent years saw the constant improved from Rankin's
$1/3$ by various authors---Sch\"onhage~\cite{Schonhage1963Bemerkung},
Rankin~\cite{Rankin1963Difference}, Maier and
Pomerance~\cite{MaierP1990Unusually} among others---with the best constant
$c=2e^\gamma$ due to Pintz~\cite{Pintz1997Very}. After the emergence of the
Maynard-Tao method from the study of the small gaps between primes, the method
was also applied to the large gap problem by Maynard~\cite{Maynard2016Large}
and Ford, Green, Konyagin and Tao~\cite{FordGKT2016Large} independently to show
that~\eqref{eq:G} holds with $c$ arbitrarily large.
Later~\cite{FordGKMT2018Long} the five authors were able to quantify this by
proving that
\begin{equation}
  G(X)\gg\frac{\log X\log_2 X\log_4 X}{\log_3 X}
\end{equation}
holds.

To discuss the corresponding question for primes in an arithmetic progression,
given a modulus $M$ and and a reduced residue class $a\pmod{M}$, put
\begin{equation}
  G(X;M,a)=\sup_{p^\prime_n\leq X}(p^\prime_{n+1}-p^\prime_n),
\end{equation}
where $p^\prime_i$ denotes the $i$-th prime that is congruent to $a\pmod{M}$.
Zaccagnini~\cite{Zaccagnini1992Note} showed that given any positive $C<1$,
uniformly for $M$ satisfying
\begin{equation}
  \omega(M)\leq\exp\biggl(C\log_2 M\frac{\log_4 M}{\log_3 M}\biggr),
\end{equation}
there holds
\begin{equation}
  G(MX;M,a)\geq(e^\gamma+o(1))\varphi(M)\frac{\log X\log_2 X\log_4 X}
  {{(\log_3 X)}^2}.
\end{equation}
The improvements that led to the breakthrough developments in the study of
large gaps between primes naturally lend themselves to the setting of
arithmetic progressions. The present work follows Maynard's
paper~\cite{Maynard2016Large} on large gaps between primes to derive the
analogous result for the case of primes in arithmetic progressions, giving a
lower bound that is uniform in terms of the moduli. Our main result is
\begin{thm}\label{thm:main}
  Let $C>0$ be given. There is an absolute constant $\kappa>0$ such that if
  $X>X_0(C)$ is large enough, we have uniformly for $M\leq
  \kappa{(\log X)}^{1/5}$ satisfying
  \begin{equation}\label{eq:assum}
    \omega(M)\leq \exp\biggl(\frac{\log_2 M\log_4 M}{\log_3 M}\biggr),
  \end{equation}
  and all reduced residues $a\pmod{M}$,  we have
  \begin{equation}\label{eq:mainresult}
    G(X;M,a)\geq(C+o(1))\varphi(M)\frac{\log X\log_2 X\log_4 X}{{(\log_3 X)}^2}.
  \end{equation}
\end{thm}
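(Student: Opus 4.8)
The plan is to transplant Maynard's argument~\cite{Maynard2016Large} to arithmetic progressions, the extra work being to track the (mild) dependence on $M$ and to accommodate the fact that the primes dividing $M$ cannot serve as sieving primes.

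\emph{Reduction to a covering problem.} Choose $x\sim\log X$ with $\prod_{p\le x}p\le X/\log X$ (possible by Mertens' theorem and the prime number theorem) and set
\begin{align*}
  y&=(C+o(1))\,\frac{\varphi(M)}{M}\cdot\frac{x\log x\log_3 x}{(\log_2 x)^2}\\
   &=(C+o(1))\,\varphi(M)\,\frac{\log X\,\log_2 X\,\log_4 X}{(\log_3 X)^2}\cdot\frac{1}{M}.
\end{align*}
It suffices to produce, for each prime $p\le x$ with $p\nmid M$, a residue class $a_p\pmod p$ such that every integer of $[1,y]$ lies in at least one of them. Granting this, the Chinese Remainder Theorem yields $k_0$ with $k_0+j\equiv-\overline{M}a-a_p\pmod p$ whenever $j\equiv a_p\pmod p$ (where $\overline M$ is the inverse of $M$ modulo $p$); since the modulus $\prod_{p\le x,\,p\nmid M}p$ is at most $X/\log X$, one may take $k_0$ with $X^{1/2}\le Mk_0\le X/2$, so that $Mk_0+a\le X$ while, by the prime number theorem for arithmetic progressions (uniform in the present range, $M$ being only a small power of $\log X$), a prime $\equiv a\pmod M$ lies below $Mk_0+a$. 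For $1\le j\le y$ the integer $M(k_0+j)+a$ is coprime to $M$, hence divisible by no prime dividing $M$, while it is divisible by some prime $p\le x<M(k_0+j)+a$ and so is composite; thus $(Mk_0+a,\,M(k_0+y)+a]$ contains no prime $\equiv a\pmod M$, which forces two consecutive primes $\equiv a\pmod M$, the smaller at most $X$, to differ by more than $My$. Since $My$ equals the right-hand side of~\eqref{eq:mainresult} by the choice of $y$, the theorem follows. Being forbidden to sieve by $p\mid M$ costs nothing here, as such $p$ never divide $M(k_0+j)+a$; the resulting loss is borne solely by the factor $\varphi(M)/M$ in $y$, which is recovered in the next step.

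\emph{Small and medium primes.} Fix a suitable slowly growing threshold $z_0$ and take $a_p=0$ for every prime $p\le z_0$ with $p\nmid M$; the integers of $[1,y]$ still uncovered form the set
\[
  \mathcal R=\{\,n\in[1,y]:\ p\mid n\ \Rightarrow\ p>z_0\ \text{or}\ p\mid M\,\}.
\]
By the fundamental lemma of sieve theory together with Mertens' theorem,
\begin{align*}
  \#\mathcal R
  &=(1+o(1))\,y\prod_{\substack{p\le z_0\\ p\nmid M}}\Bigl(1-\tfrac1p\Bigr)\\
  &\le(1+o(1))\,\frac{M}{\varphi(M)}\cdot\frac{e^{-\gamma}y}{\log z_0}.
\end{align*}
By the choice of $y$, the quantity $\tfrac{M}{\varphi(M)}y$---and hence $\#\mathcal R$---is then essentially of the size it has in Maynard's treatment, the factor $M/\varphi(M)\ll\log_3 X$ coming from the missing small primes being cancelled by the $\varphi(M)/M$ in $y$. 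One now runs the Maynard--Tao $k$-dimensional sieve on $\mathcal R$, with $k$ a large constant depending on $C$, to assign residue classes to the medium primes $q\in(z_0,x/2]$ with $q\nmid M$: taking the sieve weights from a fixed smooth function and evaluating the attached singular series to sufficient precision, one shows that these primes can be chosen to cover all of $\mathcal R$ save $o(x/\log x)$ of its elements, the extra efficiency of the multidimensional sieve being precisely what upgrades Rankin's fixed constant to an arbitrary one. The only features not already present in~\cite{Maynard2016Large} are that $q$ must avoid the at most $\omega(M)$ primes dividing $M$---negligible, the medium primes vastly outnumbering these---and that every estimate must be uniform in $M$, which it is: $M$ enters only through coprimality to a fixed power of $\log X$ and through the quantities $\omega(M)$ and $M/\varphi(M)$, both kept small by~\eqref{eq:assum}.

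\emph{Large primes, and the main obstacle.} The $\sim x/(2\log x)$ primes in $(x/2,x]$---all coprime to $M$, since $M<x/2$---outnumber the $o(x/\log x)$ elements of $\mathcal R$ that remain uncovered; assigning to each of those a distinct prime in $(x/2,x]$ as its residue class completes the covering of $[1,y]$ and with it the proof. The genuine difficulty is the middle step: performing the Maynard--Tao computation---the construction of the weights and the asymptotic evaluation of the main term, to the precision that yields the gain---uniformly over all admissible $M$, and verifying at each stage that excluding the primes dividing $M$ and carrying the factor $\varphi(M)/M$ never inflates an error term past the point of usefulness; hypothesis~\eqref{eq:assum} is imposed precisely to secure this.
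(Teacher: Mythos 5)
Your reduction to a covering problem is in the spirit of the paper (the paper uses Linnik's theorem rather than the Siegel--Walfisz theorem to position the anchoring prime, but both work in the range $M\le\kappa(\log X)^{1/5}$, and the $M\le\kappa(\log X)^{1/5}$ restriction is imposed precisely to make that step painless). The genuine gap is in the ``small and medium primes'' step, where you have discarded the Erd\H{o}s--Rankin construction and replaced it with a single-stage sieve. This fails quantitatively. Taking $a_p=0$ for all $p\le z_0$, $p\nmid M$, leaves the residual set $\mathcal R=\{n\le y:p\mid n\Rightarrow p>z_0\text{ or }p\mid M\}$ of size $\asymp (M/\varphi(M))\,y/\log z_0$. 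Since $z_0\le x$, we have $\log z_0\le\log x\sim\log_2 X$, so (using $M/\varphi(M)\gg 1$ and your choice of $y$)
\begin{equation*}
  \#\mathcal R\ \gg\ \frac{y}{\log_2 X}\ \asymp\ C\,\frac{\log X\,\log_4 X}{(\log_3 X)^2},
\end{equation*}
which is larger by an unbounded factor than $x/\log x\asymp\log X/\log_2 X$, the total number of primes remaining in $(z_0,x]$. Since each prime's residue class can, after the probabilistic gain, account for at most $O_k(1)$ elements of the target set, no choice of the fixed constant $k$ closes a gap that grows like $\log_2 X\,\log_4 X/(\log_3 X)^2$. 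Adjusting $z_0$ cannot help: it is bounded by $x$.

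What the paper does instead is the genuine Erd\H{o}s--Rankin double sieve: $a_p\equiv 0\pmod p$ for the \emph{medium} primes $y<p\le z$ with $y=\exp\bigl((1-\varepsilon)\log x\log_3 x/\log_2 x\bigr)$ and $z=x/\log_2 x$, and then $a_p\equiv 1\pmod p$ for the \emph{small} primes $p\le y$. The second stage is essential: it cuts the residual to $\mathcal R_0^{(M)}\cup\mathcal R^{(M)}\cup E_2$ with $|\mathcal R_0^{(M)}|\ll x/(\log x)^{1+\varepsilon}$ (Lemma~\ref{lem:Rnaught}), $|E_2|=o(x/\log x)$, and $\sum_m|\mathcal R_m^{(M)}|\ll C_U x/\log x$ (Lemma~\ref{lem:RmSums}). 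Crucially, each $\mathcal R_m^{(M)}$ is a set of \emph{primes} $p\in(z,U/m]$ carrying the structure $(mp-1,P_M(y))=1$, which is exactly what the weights $\mu_{m,q}$ of~\eqref{eq:mu} are built to exploit via the divisor conditions $d_i\mid n+h_iq$ and $e_i\mid m(n+h_iq)-1$. ``Running the Maynard--Tao $k$-dimensional sieve on $\mathcal R$'' for your unstructured rough set is not a defined operation; Proposition~\ref{prop:main} and the singular-series computation of Section~3 are tied to the $\mathcal R_m^{(M)}$ decomposition. You have correctly located the technically hard step, but you feed it the wrong set, and the step is then unfilled rather than merely routine.
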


\section{Setup and the Erd\H{o}s-Rankin construction} Recall that a set of
primes $\mathcal{P}$ is said to \emph{sieve out} an interval $I$ if there is a
choice of residue classes $a_p\pmod{p}$ for each $p\in\mathcal{P}$, such that
for all $n\in I$ there is a $p\in\mathcal{P}$ such that $n\equiv a_p\pmod{P}$.
Our aim is to show, along the lines of the classical Erd\H{o}s-Rankin
construction, that if $M$ is an integer $\leq cx^{1/5}$, then the primes
$p\leq x$, $p\nmid M$ can sieve out the interval $[1,U]$, while taking $U$ as
large as possible with respect to $x$.

We will write $\mathfrak{P}$ to denote all primes and $\mathfrak{P}_x$ to
denote those that don't exceed $x$, and denote by $\mathfrak{P}^{(M)}$ and
$\mathfrak{P}^{(M)}_x$ the same sets with prime divisors of $M$ excluded.  Put
$P_M(x)$ and $P(x)$ for products of primes in $\mathfrak{P}^{(M)}_x$ and
$\mathfrak{P}_x$ respectively.

Suppose that $\mathfrak{P}^{(M)}_x$ can sieve out $[1,U]$, so that
corresponding to each $p \in \mathfrak{P}^{(M)}_x$, there exists a residue
class $a_p\pmod{p}$, such that each number $n=1,\ldots,U$ satisfies $n \equiv
a_p \pmod p$ for some $p$.

By the Chinese Remainder Theorem, in any block of $P_M(x)$, integers, there
is a $U_0$ such that $U_0 \equiv -a_p \pmod{p}$ for each $p \mid P_M(x)$.
Let $j \in [1,U]$, and let $p$ be a prime in $\mathfrak{P}^{(M)}_x$ such that
$j \equiv a_p \pmod{p}$. Let $r$ be such that $Mr \equiv -1 \pmod{P_M(x)}$.
Then for any reduced residue $a\pmod{M}$,
\begin{equation}
  \begin{split}
    M(U_0+ar+j)+a &\equiv M(U_0+j) - a + a\\
    &\equiv M(-a_p + a_p) \equiv 0 \pmod{p},
  \end{split}
\end{equation}
so $M(U_0+ar+j)+a$ is composite provided it is greater than $x$, which is the
case if $\frac{x}{M} \leq U_0$. We would also like to ensure the existence of a
prime in the arithmetic progression preceding our block of composites. That
would follow from the best known result on Linnik's
constant~\cite{Xylouris2011Uber} if $MU_0\geq c_0 M^5$, i.e. $U_0 \geq c_0
M^4$. Here and throughout, $\kappa$ denotes $c_0^{-5}$, where $c_0$ is the
constant for which Linnik's theorem with exponent $5$ is valid. We impose the
condition $M\leq\kappa x^{1/5}$ so that $U_0 \geq c_0 M^4$ is implied by
$\frac{x}{M} \leq U_0$.  So with $U_0 \in [\tfrac{x}{M}, \tfrac{x}{M} +
P_M(x)]$, we find a prime $p_n \leq M(U_0+ar)$ such that $p_{n+1} - p_n \geq
MU$.

Heuristically, since each $a_p\pmod{p}$ removes an element with probability
$1/p$, it is reasonable to expect that the integers we can sieve out using
primes that don't divide $M$ will be less numerous by a factor of $\prod_{p\mid
M}(1-1/p)$ than those we can sieve out using all primes. Accordingly we put
\begin{equation}\label{eq:U}
  U=C_U \frac{\phi(M)}{M} \frac{x \log y}{\log\log x},
\end{equation}
where
\begin{equation}
  y=\exp\left( (1-\varepsilon)\frac{\log x\log\log\log x}{\log\log x} \right).
\end{equation}
and $C_U$ is a constant to be specified later. Now putting
$x=(1-\varepsilon)\log X$ large enough depending only on $\varepsilon$ yields
\begin{gather}
  \begin{aligned}
    G(X;M,a)&\geq G(M(U_0+ar);M,a)\\
    &\geq MU\\
    &=(C_U+o(1))\varphi(M)\log X\frac{\log_2 X\log_4 X}{{(\log_3 X)}^2}.
  \end{aligned}
\end{gather}
Thus our task is to show that $\mathfrak{P}^{(M)}_x$ can sieve out $[1,U]$
while taking $C_U$ arbitrarily large in~\eqref{eq:U}.

We take $a_p \equiv 0 \pmod{p}$ for primes $p \in \mathfrak{P}^{(M)}$, $y<p\leq
z$, where
\begin{equation}
  z=\frac{x}{\log\log x}.
\end{equation}
The set that remains after this sieving is
\begin{equation}
  \begin{split}
    \{m \leq U : \text{$m$ is $y$-smooth}\} &\cup \{mp \leq U: \text{$p>z$, $m$
    is $y$-smooth}\}\\
    &\cup\bigcup_{\substack{p\mid M\\y<p\leq z}}\{n\leq U:p\mid n\}.
  \end{split}
\end{equation}
Denote the last union over $p\mid M$ by $E_1$. Then
\begin{gather}
  \begin{aligned}\label{eq:E1}
    \abs{E_1}&\leq\frac{U}{y}\omega(M)\\
    &\ll\frac{U\log M}{y\log_2 M}\\
    &\ll\frac{x\log y\log x}{y{(\log \log x)}^2}=o\bigl(\frac{x}{\log x}\bigr).
  \end{aligned}
\end{gather}
For the second sieving we use residue classes $a_p
\equiv 1 \pmod{p}$ for all $p \in
\mathfrak{P}^{(M)}_y$.  Then what remains is
\begin{equation}
  \begin{split}
    \{m \leq U : &\text{ $m$ is $y$-smooth, $\left(m-1,P_M(y)\right)$}\}\\
    &\cup \{mp \leq U: \text{$p>z$, $m$ is $y$-smooth, $\left( mp-1,P_M(y)
    \right)=1$}\}\\
    &\cup E_2\\
    &=\mathcal{R}_0^{(M)} \cup \mathcal{R}^{(M)} \cup E_2,
  \end{split}
\end{equation}
where $E_2$ is the result of sieving $E_1$, so $\lvert E_2 \rvert \leq \lvert
E_1 \rvert$.

We split $\mathcal{R}^{(M)}$ according to the integer $m$, and write
\begin{equation}
  \mathcal{R}_m^{(M)}=\{ z <p \leq U/m:\left(mp-1,P_M(y)\right)=1\}.
\end{equation}
Note that if both $M$ and $m$ are odd, then this set is vacuous. So we posit
the following restriction.
\begin{equation}\label{eq:assum2}
  2\nmid M\Rightarrow 2\mid m.
\end{equation}
In the sequel, $m$ will be understood to satisfy~\eqref{eq:assum2}. We have the
following estimate on the size of $\mathcal{R}_m^{(M)}$.
\begin{lem}\label{lem:Rm}
  Uniformly for $z+z/\log x\leq V\leq x{(\log x)}^2$, $M\leq\kappa x^{1/5}$ and
  $m\leq x$ satisfying~\eqref{eq:assum2}, there holds
  \begin{multline}\label{eq:fundlem}
    \#\{z<p\leq V:(mp-1,P_M(y))=1\}\\
    =\frac{V-z}{\log x}\biggl(
      \prod_{\substack{p\leq y\\p\nmid M\\p\nmid m}}
      \frac{p-2}{p-1}
    \biggr)
    \biggl(1+O(\exp(-{(\log x)}^{1/2}))\biggr).
  \end{multline}
  In particular, uniformly for $m\leq U(1-1/\log x)/z$,
  \begin{equation}\label{eq:Rm}
    \abs{\mathcal{R}_m^{(M)}}=\frac{2e^{-\gamma}U(1+o(1))}{m(\log x)(\log y)}
    \frac{M}{\varphi(M)}
    \biggl(\prod_{\substack{p>2\\p\nmid M}}\frac{p(p-2)}{{(p-1)}^2}\biggr)
    \biggl(\prod_{\substack{p>2\\p\nmid M\\p\mid m}}\frac{p-1}{p-2}\biggr)
  \end{equation}
\end{lem}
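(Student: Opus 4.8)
The plan is to read the left-hand side of \eqref{eq:fundlem} as the sifting function of a one-dimensional sieve applied to the sequence $\mathcal{A}=\{mp-1:\ z<p\le V,\ p\text{ prime}\}$, sifted by the primes dividing $P_M(y)$, and to invoke the fundamental lemma of sieve theory once an adequate level of distribution is in place. First I would record the local data. For squarefree $d\mid P_M(y)$ the congruence $d\mid mp-1$ is solvable only when $(d,m)=1$, and then it confines $p$ to the single reduced class $\overline{m}\pmod d$, whence
\begin{equation*}
  \#\{z<p\le V:\ d\mid mp-1\}=\mathbf{1}_{(d,m)=1}\bigl(\pi(V;d,\overline{m})-\pi(z;d,\overline{m})\bigr)=g(d)\bigl(\pi(V)-\pi(z)\bigr)+r_d,
\end{equation*}
where $g$ is multiplicative with $g(q)=\tfrac1{q-1}$ for primes $q\le y$, $q\nmid mM$, and $g(q)=0$ for $q\mid m$. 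Since $\sum_{w<q\le y}g(q)\log q=\log(y/w)+O(1)$ by Mertens' theorem, this is a sieve of dimension one, with sifting density $\prod_{q\mid P_M(y)}(1-g(q))=\prod_{q\le y,\ q\nmid M,\ q\nmid m}\frac{q-2}{q-1}$; condition \eqref{eq:assum2} is precisely what guarantees the vanishing factor $q=2$ never occurs.

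The decisive step is the control of $\sum_{d\le D}\abs{r_d}$. Since $\log y$ exceeds every power of $\log x$, the Siegel--Walfisz range $d\le(\log x)^{A}$ is too short for the sieve to be asymptotically lossless, so I would appeal to the Bombieri--Vinogradov theorem, by which $\sum_{d\le D}\abs{r_d}\ll_A(V-z)(\log x)^{-A}$ for every $A$ provided $D\le x^{1/2-\varepsilon}$; here one uses $\log z\sim\log V\sim\log x$ on the admissible range to compare $\pi(V)-\pi(z)$ with $\tfrac{V-z}{\log x}$. Crucially $\log y$ is nonetheless much smaller than $\log x$, so the choice $D=x^{1/2-\varepsilon}$ makes $s:=\log D/\log y\gg\log_2 x/\log_3 x\to\infty$, and the fundamental lemma of the one-dimensional sieve then gives
\begin{equation*}
  \#\{z<p\le V:\ (mp-1,P_M(y))=1\}=\bigl(\pi(V)-\pi(z)\bigr)\Bigl(\prod_{\substack{q\le y\\ q\nmid M,\ q\nmid m}}\tfrac{q-2}{q-1}\Bigr)\bigl(1+O(e^{-s})\bigr)+O_A\bigl((V-z)(\log x)^{-A}\bigr).
\end{equation*}
As the sifting density is $\asymp1/\log y=(\log x)^{-1+o(1)}$, both error terms are dominated by the main term; substituting $\pi(V)-\pi(z)=\tfrac{V-z}{\log x}(1+o(1))$ then yields \eqref{eq:fundlem}, and for the applications below only the weaker form $1+o(1)$ is needed.

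To deduce \eqref{eq:Rm} I would specialise $V=U/m$: the hypothesis $m\le U(1-1/\log x)/z$ forces $U/m\ge z/(1-1/\log x)\ge z+z/\log x$, and $U/m\le U\le x(\log x)^2$, so \eqref{eq:fundlem} applies. It then remains to evaluate the product, which I would do by writing $\tfrac{q-2}{q-1}=(1-\tfrac1q)(1-\tfrac1{(q-1)^2})$: Mertens' theorem $\prod_{q\le y}(1-\tfrac1q)=(e^{-\gamma}+o(1))/\log y$ furnishes the factor $2e^{-\gamma}/\log y$ (the $2$ coming from the missing $q=2$), the convergent products $\prod(1-\tfrac1{(q-1)^2})$ may be completed to run over all primes with a relative error $1+O(1/y)$, and the finitely many Euler factors at primes dividing $M$ or $m$ (all of size $\le(\log x)^{O(1)}<y$) can be rearranged into $\tfrac{M}{\varphi(M)}\prod_{p>2,\,p\nmid M}\tfrac{p(p-2)}{(p-1)^2}\prod_{p>2,\,p\nmid M,\,p\mid m}\tfrac{p-1}{p-2}$; all of this costs only a further $1+o(1)$.

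The step I expect to be the main obstacle is precisely this level-of-distribution input: $y$ lies in an awkward intermediate range---larger than any power of $\log x$ but still a vanishing power of $x$---so Siegel--Walfisz alone cannot reach the level $y^{2+o(1)}$ required for a lossless sieve and the Bombieri--Vinogradov theorem is genuinely needed, while it is the smallness of $y$ relative to $x^{1/2}$ that then makes the fundamental lemma asymptotically exact. The only other delicate point is the (routine but fiddly) bookkeeping of the $2$-adic Euler factors, and of the role of \eqref{eq:assum2}, in passing from \eqref{eq:fundlem} to the closed form \eqref{eq:Rm}.
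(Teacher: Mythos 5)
Your argument reconstructs the proof that the paper delegates by citation to Maynard's Lemma 3: read the count as a one-dimensional sieve on $\{mp-1:z<p\le V\}$ against the primes of $P_M(y)$, control the remainder with Bombieri--Vinogradov at level $x^{1/2-\varepsilon}$, invoke the fundamental lemma, and then evaluate the Mertens product to pass to \eqref{eq:Rm}, with \eqref{eq:assum2} ensuring the degenerate Euler factor at $p=2$ never appears. You correctly flag that this route produces a weaker $1+o(1)$ saving rather than the stated $1+O(\exp(-{(\log x)}^{1/2}))$, and your observation that this suffices is accurate, since every downstream use of the lemma is through \eqref{eq:Rm} or through a crude lower bound on $\abs{\mathcal{R}_m^{(M)}}$.
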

\begin{proof}
  This is almost identical to Lemma 3 of~\cite{Maynard2016Large}, the only
  difference being that in our case the primes which divide $M$ are excluded
  from the sieving process in the application of the fundamental lemma,
  effecting the constraint $p\nmid M$ in~\eqref{eq:fundlem}. Note
  that~\eqref{eq:assum2} ensures that $p=2$ does not occur in the product.
\end{proof}
\begin{lem}\label{lem:RmSums}
  For any $K\geq2$, we have
  \begin{equation}
    \sum_{U/(zK)\leq m<U/z}\abs{\mathcal{R}_m^{(M)}}
    \ll\frac{UM\log K}{(\log x)(\log y)\varphi(M)}.
  \end{equation}
  In particular,
  \begin{equation}
    \sum_{U/(z{(\log_2 x)}^2)\leq m<U/z}\abs{\mathcal{R}_m^{(M)}}
    =o\biggl(\frac{C_{U}x}{\log x}\biggr),
    \qquad\sum_{1\leq m<U/(z{(\log_2 x)}^2)}\abs{\mathcal{R}_m^{(M)}}
    =O\biggl(\frac{C_{U}x}{\log x}\biggr).
  \end{equation}
\end{lem}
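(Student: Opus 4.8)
The plan is to reduce the lemma to Lemma~\ref{lem:Rm} together with the classical mean value $\sum_{m\le t}1/\varphi(m)=A\log t+O(1)$. Concretely I aim to establish the pointwise bound
\begin{equation*}
  \abs{\mathcal{R}_m^{(M)}}\ll\frac{UM}{(\log x)(\log y)\varphi(M)}\cdot\frac1{\varphi(m)}
  \qquad\Bigl(1\le m\le\tfrac Uz\bigl(1+\tfrac1{\log x}\bigr)^{-1}\Bigr),
\end{equation*}
to dispose of the short tail $\tfrac Uz(1+\tfrac1{\log x})^{-1}<m<U/z$ separately, and then to sum.

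For the pointwise bound: in the indicated range one has $z+z/\log x\le U/m\le x{(\log x)}^2$ (the upper inequality because $U\ll x\log x$), so~\eqref{eq:fundlem} applies with $V=U/m$ and gives $\abs{\mathcal{R}_m^{(M)}}\ll\tfrac{U/m}{\log x}\prod_{p\le y,\ p\nmid M,\ p\nmid m}\tfrac{p-2}{p-1}$. As throughout, $m$ satisfies~\eqref{eq:assum2}, so $p=2$ does not occur and I may write the product as $\bigl(\prod_{2<p\le y}\tfrac{p-2}{p-1}\bigr)\prod_{2<p\le y,\ p\mid Mm}\tfrac{p-1}{p-2}$. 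Mertens' theorem gives $\prod_{2<p\le y}\tfrac{p-2}{p-1}\asymp 1/\log y$, while for any integer $n$ one has $\prod_{p\mid n,\ p>2}\tfrac{p-1}{p-2}\ll n/\varphi(n)$, since $\tfrac{{(p-1)}^2}{p(p-2)}=1+\tfrac1{p(p-2)}$ and $\prod_{p>2}(1+\tfrac1{p(p-2)})$ converges; applying the latter with $n=M$ and with $n=m$—all of whose prime factors are $<y$—turns $\prod_{2<p\le y,\ p\mid Mm}\tfrac{p-1}{p-2}$ into $\ll\tfrac M{\varphi(M)}\cdot\tfrac m{\varphi(m)}$, and the pointwise bound follows.

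For the short tail: if $\tfrac Uz(1+\tfrac1{\log x})^{-1}<m<U/z$ then $z<U/m<z+z/\log x$, so $\mathcal{R}_m^{(M)}\subseteq\{z<p\le z+z/\log x\}$ and $\abs{\mathcal{R}_m^{(M)}}\ll z/{(\log x)}^2$ by Brun--Titchmarsh; as there are $\ll U/(z\log x)+1$ such $m$, their combined contribution is $\ll U/{(\log x)}^3+z/{(\log x)}^2$. Recalling $U=C_U\tfrac{\varphi(M)}{M}\tfrac{x\log y}{\log_2 x}$, $z=x/\log_2 x$ and $M/\varphi(M)\ll\log_3 x$, this is $\ll$ the right-hand side of every estimate claimed in the lemma, so it can be absorbed at the end.

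Summing the pointwise bound over $U/(zK)\le m<U/z$ and using $\sum_{U/(zK)\le m<U/z}1/\varphi(m)\ll\log K$ for $K\ge2$ (from the mean value, the case $U/(zK)<1$ being trivial) gives the first assertion, since $UM/\varphi(M)=C_U x\log y/\log_2 x$. Taking $K={(\log_2 x)}^2$ the bound becomes $\ll\tfrac{C_U x\log_3 x}{(\log x)\log_2 x}=o(C_U x/\log x)$; and summing the pointwise bound over $1\le m<U/(z{(\log_2 x)}^2)$, where now $\sum 1/\varphi(m)\ll\log\bigl(U/(z{(\log_2 x)}^2)\bigr)\ll\log_2 x$ (as $U/z\ll\log y$, so $\log(U/z)\ll\log_2 y\ll\log_2 x$), gives $\ll C_U x/\log x$. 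I do not expect a genuine obstacle here: Lemma~\ref{lem:Rm} carries the content. The only steps that need real care are the use of the \emph{sharp} mean value $\sum_{m\le t}1/\varphi(m)\ll\log t$—the crude $1/\varphi(m)\ll(\log_2 m)/m$ would cost a factor $\log_2 x$ and ruin both the $\log K$ and the $C_U x/\log x$—and the separate Brun--Titchmarsh treatment of the short tail, where~\eqref{eq:fundlem} does not apply.
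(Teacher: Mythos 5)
Your proof is correct and follows essentially the same route as the paper's: split off the narrow tail at $m\approx U/z$ where~\eqref{eq:fundlem} no longer applies, apply Lemma~\ref{lem:Rm} in the main range, and observe that summing $\tfrac1m\prod_{p\mid m,\,p>2}\tfrac{p-1}{p-2}$ over $m\in[U/(zK),U/z)$ gives $\ll\log K$. The only cosmetic differences are that you bound the $m$-dependent Euler factor by $m/\varphi(m)$ and quote the classical mean value $\sum_{m\le t}1/\varphi(m)\ll\log t$, where the paper instead uses $\prod_{p\mid m}\tfrac{p+1}{p}\le\sum_{d\mid m}1/d$ and swaps the order of summation (an equivalent computation), and you treat the tail via Brun--Titchmarsh rather than the paper's direct Chebyshev-type bound $\abs{\mathcal{R}_m^{(M)}}\ll U/(m\log x)$.
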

\begin{proof}
  Put $w_1=U/(zK)$ and $w_2=U(1-1/\log x)/x$. For $m\geq w_2$, we use the
  trivial bound $\abs{\mathcal{R}_m^{(M)}}\ll U/(m\log x)$ to see that the
  contribution from $w_2\leq U/z$ is $O(U/{(\log x)}^2)$.

  We regroup the factors in~\eqref{eq:Rm} to separate the effect of $M$.
  \begin{equation}
    \biggl(\prod_{\substack{p>2\\p\nmid M}}\frac{p(p-2)}{{(p-1)}^2}\biggr)
    \biggl(\prod_{\substack{p>2\\p\nmid M\\p\mid m}}\frac{p-1}{p-2}\biggr)
    \leq\biggl(\prod_{p\mid M}\frac{{(p-1)}^2}{p(p-2)}\biggr)
    \biggl(\prod_{p>2}\frac{p(p-2)}{{(p-1)}^2}\biggr)
    \biggl(\prod_{\substack{p>2\\p\mid m}}\frac{p-1}{p-2}\biggr).\\
  \end{equation}
  The first product on the right hand side can be estimated as
  \begin{gather}
    \begin{aligned}
      \prod_{
        \substack{p\mid M\\p>2}
      }\biggl(1+\frac{1}{p(p-2)}\biggr)
      \leq\prod_{p\mid M}\biggl(1+\frac{3}{p^2}\biggr)\leq{\zeta(2)}^3,
    \end{aligned}
  \end{gather}
  whence we have
  \begin{equation}
    \abs{\mathcal{R}_m^{(M)}}\ll\frac{U(1+o(1))}{m(\log x)(\log y)}
    \frac{M}{\varphi(M)}
    \biggl(\prod_{\substack{p>2\\p\mid m}}\frac{p-1}{p-2}\biggr).
  \end{equation}
  Now for $w_1\leq m<w_2$, we use the bound
  \begin{equation}
\prod_{\substack{p>2\\p\mid m}}\frac{p-1}{p-2}
\ll\prod_{p\mid m}\frac{p+1}{p}
\leq\sum_{d\mid m}\frac{1}{d},
\end{equation}
and obtain
\begin{gather}
  \begin{aligned}
    \sum_{w_1\leq m<w_2}\abs{\mathcal{R}_m^{(M)}}
    &\ll\frac{UM}{(\log x)(\log y)\varphi(M)}
    \sum_{w_1\leq m<w_2}\frac{1}{m}
    \sum_{d\mid m}\frac{1}{d}\\
    &=\frac{UM}{(\log x)(\log y)\varphi(M)}
    \sum_{d<w_2}\frac{1}{d^2}
    \sum_{w_1/d\leq m<w_2/d}\frac{1}{m}\\
    &\ll\frac{UM(\log K+O(1))}{(\log x)(\log y)\varphi(M)},
    \end{aligned}
    \end{gather}
    and substituting the definitions of $U$ and $y$ yields the particular cases.
\end{proof}
We also have a bound for $\mathcal{R}_0^{(M)}$.
\begin{lem}\label{lem:Rnaught}
  We have
  \begin{equation}
    \abs{\mathcal{R}_0^{(M)}}\ll\frac{x}{{(\log x)}^{1+\varepsilon}}.
  \end{equation}
\end{lem}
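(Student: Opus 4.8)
The plan is to use the two defining conditions of $\mathcal{R}_0^{(M)}$ in tandem: on their own neither is strong enough, since there are only $\Psi(U,y)=x{(\log x)}^{-\varepsilon/(1-\varepsilon)+o(1)}$ many $y$-smooth $m\le U$, while the $m\le U$ with $(m-1,P_M(y))=1$ number $\asymp C_U x/\log_2 x$; but combined they give a power saving past $(\log x)^{-1}$. I would keep ``$m$ is $y$-smooth'' as a smooth-number count and attack only the condition $(m-1,P_M(y))=1$ with an upper-bound sieve. Applying the fundamental lemma of sieve theory (a Brun or $\beta$-sieve of dimension $1$, sifting by the primes dividing $P_M(y)$) at level $D=y^{10}$ --- and note that $D=x^{o(1)}$, since $\log y={(\log x)}^{1+o(1)}$ in this range --- furnishes coefficients $\lambda_d^+$ supported on squarefree $d\mid P_M(y)$ with $d\le D$, satisfying $\abs{\lambda_d^+}\le1$, $\lambda_1^+=1$, $\mathbf{1}[(n,P_M(y))=1]\le\sum_{d\mid n}\lambda_d^+$ for all $n$, and $\sum_{d\mid P_M(y)}\lambda_d^+g(d)\ll\prod_{p\mid P_M(y)}(1-g(p))$ for any multiplicative $g$ of dimension $1$ with $0\le g(p)<1$. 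Hence
\begin{equation*}
  \abs{\mathcal{R}_0^{(M)}}\le\sum_{\substack{d\mid P_M(y)\\d\le D}}\lambda_d^+\,\#\{m\le U:\text{$m$ is $y$-smooth},\ m\equiv1\pmod{d}\}.
\end{equation*}

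For the inner count I would bring in the distribution of smooth numbers in arithmetic progressions. Let $\alpha=\alpha(U,y)$ be the saddle-point parameter and $\Psi_{(d)}(U,y)=\#\{m\le U:\text{$m$ is $y$-smooth},\ (m,d)=1\}$. Standard estimates, valid uniformly throughout the very mild modulus range $d\le y^{10}=x^{o(1)}$, give
\begin{gather*}
  \#\{m\le U:\text{$m$ is $y$-smooth},\ m\equiv1\pmod{d}\}=\frac{\Psi_{(d)}(U,y)}{\varphi(d)}+E_d,\\
  \Psi_{(d)}(U,y)=\Psi(U,y)\prod_{p\mid d}\bigl(1-p^{-\alpha}\bigr)+E_d',
\end{gather*}
with $\sum_{d\le D}\bigl(\abs{E_d}+\abs{E_d'}\bigr)\ll x/{(\log x)}^{2}$. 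Substituting into the previous display and invoking the fundamental lemma for the multiplicative function $g(p)=(1-p^{-\alpha})/(p-1)$ --- which does satisfy $0\le g(p)<1$ (at $p=2$, $g(2)=1-2^{-\alpha}<\tfrac12$ as $\alpha<1$) and $g(p)\asymp1/p$, so the sieve has dimension $1$ --- yields
\begin{equation*}
  \abs{\mathcal{R}_0^{(M)}}\ll\Psi(U,y)\prod_{\substack{p\le y\\p\nmid M}}\biggl(1-\frac{1-p^{-\alpha}}{p-1}\biggr)+\frac{x}{{(\log x)}^{2}}.
\end{equation*}

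It remains to size the two factors. In this range $u:=\log U/\log y=\frac{\log_2 x}{(1-\varepsilon)\log_3 x}(1+o(1))$ and $\alpha=1-o(1)$, so $\frac{1-p^{-\alpha}}{p-1}=\frac1p+O\bigl(\frac{1-\alpha}{p}\bigr)$ uniformly for $p\le y$, whence $\sum_{p\le y}\bigl|\frac{1-p^{-\alpha}}{p-1}-\frac1p\bigr|=o(1)$ and, by Mertens (the primes dividing $M$ altering the product by only a factor $M/\varphi(M)={(\log x)}^{o(1)}$),
\begin{equation*}
  \prod_{\substack{p\le y\\p\nmid M}}\biggl(1-\frac{1-p^{-\alpha}}{p-1}\biggr)={(\log x)}^{-1+o(1)}.
\end{equation*}
On the other hand $\Psi(U,y)=U\rho(u)(1+o(1))$ with $U=x{(\log x)}^{1+o(1)}$, and the Dickman estimate $\log\rho(u)=-(1+o(1))u\log u=-\frac{1}{1-\varepsilon}(1+o(1))\log_2 x$ gives $\Psi(U,y)=x{(\log x)}^{1-\frac1{1-\varepsilon}+o(1)}$. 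Multiplying, $\abs{\mathcal{R}_0^{(M)}}\ll x{(\log x)}^{-\frac1{1-\varepsilon}+o(1)}$; since $\frac1{1-\varepsilon}-(1+\varepsilon)=\frac{\varepsilon^2}{1-\varepsilon}$ is a fixed positive quantity, the $o(1)$ in the exponent is absorbed once $x$ is large, and $\abs{\mathcal{R}_0^{(M)}}\ll x/{(\log x)}^{1+\varepsilon}$ follows.

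The part I expect to be the real work is the second step: a clean equidistribution statement for $y$-smooth numbers in the progressions $m\equiv1\pmod d$, together with the companion asymptotic for $\Psi_{(d)}(U,y)$, with enough uniformity in $d$ and a genuine saving after summing over $d\le D$. Both the modulus range ($d\le x^{o(1)}$) and the value of $u$ (a fixed power of $\log x$) are very benign, so estimates of this strength are available in the literature on smooth numbers, but assembling them is the one non-routine ingredient here; the fundamental lemma and the Mertens and Dickman computations are entirely standard.
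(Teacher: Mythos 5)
Your proposal correctly reconstructs the content of Maier--Pomerance Theorem 5.3, which is all the paper cites for this lemma, adding the same observation the paper makes that excluding $p\mid M$ from the sifting set only costs a factor $M/\varphi(M)=(\log x)^{o(1)}$. The approach matches: an upper-bound sieve (fundamental lemma) applied to the condition $(m-1,P_M(y))=1$, fed by equidistribution of $y$-smooth $m\le U$ in progressions to moduli $d\le y^{10}=x^{o(1)}$, then Mertens and the Dickman/saddle-point estimates --- and your bookkeeping ($\tfrac{1}{1-\varepsilon}-(1+\varepsilon)=\tfrac{\varepsilon^2}{1-\varepsilon}>0$) for why the exponent clears $1+\varepsilon$ is correct.
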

\begin{proof}
  This is Theorem 5.3 in~\cite{MaierP1990Unusually}, again with the only
  difference being that prime divisors of $M$ are excluded from the sieving
  process in the invocation of Theorem 4.2 of~\cite{HalberstamR1974Sieve},
  again contributing a factor $\ll M/\varphi(M)$. By our restriction on the
  size of $M$, this can be absorbed in the ${(\log x)}^{-\varepsilon}$ factor.
\end{proof}
With these estimates, we will use the key proposition below to prove our main
result.
\begin{prop}\label{prop:main}
  Let $\delta>0$ be given, and $x>x_0(\delta)$ be large enough. For each
  $m<Uz^{-1}{(\log_2 x)}^{-2}$ satisfying~\eqref{eq:assum2}, let
  $\mathcal{I}_m\subseteq[x/2,x]$ be an interval of length at least
  $\delta\abs{\mathcal{R}_m^{(M)}}\log x$.  Then there exits a choice of
  residue classes $a_q\pmod{q}$ for each prime $q\in\mathcal{I}_m$ such that
  for all $p\in\mathcal{R}_m^{(M)}$ there is a $q\in\mathcal{I}_m$ such that
  $p\equiv a_q\pmod{q}$.
\end{prop}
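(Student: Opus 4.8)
The plan is to recast the statement as a covering problem and attack it with the iterative, Erd\H{o}s--Rankin-style covering procedure, reinforced by a sieve estimate that forces every prime we deploy to account for many elements. Fix $m$ and write $\mathcal{R}=\mathcal{R}_m^{(M)}$, $r_0=\abs{\mathcal{R}}$. Two features of the set-up drive everything. First, $\mathcal{R}\subseteq(z,U/m]$ with $U/m>z{(\log_2 x)}^2=x\log_2 x$, so the window containing $\mathcal{R}$ is far longer than any modulus $q\in\mathcal{I}_m\subseteq[x/2,x]$; this is what lets a single class $a_q\pmod q$ meet $\mathcal{R}$ in several points. Second, by the prime number theorem in short intervals $\mathcal{I}_m$ contains $(1+o(1))\abs{\mathcal{I}_m}/\log x\geq(1+o(1))\delta r_0$ primes. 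Since $\delta$ may be small — in the application to Theorem~\ref{thm:main} it is of order $C_U^{-1}$ — there are fewer primes available than elements of $\mathcal{R}$, so on average each prime must account for $\gg\delta^{-1}$ of them; the classical tally of one element per prime does not suffice.

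Put $k=\lceil 3/\delta\rceil$. The key estimate is that for any $R\subseteq\mathcal{R}$ with $\abs R\geq x^{1/2}$ (say) and any sub-collection $Q$ of the primes of $\mathcal{I}_m$ with $\abs Q\gg\delta r_0$, there are $q\in Q$ and $a\pmod q$ with $\#\{p\in R:p\equiv a\pmod q\}\geq k$. To prove it one counts the $k$-term configurations $p_1<\dots<p_k$ in $R$ lying in a common residue class to some modulus $q\in Q$, that is $\sum_{q\in Q}\sum_a\binom{\#\{p\in R:p\equiv a\pmod q\}}{k}$, and shows it is positive. Expanding, the count is $\tfrac1{k!}\sum_{q\in Q}\#\{(p_1,\dots,p_k)\in R^k\text{ distinct}:\ q\mid p_i-p_j\ \forall i,j\}$; as the differences lie in $(0,U/m]\subseteq(0,{(x/2)}^2)$ at most one $q\in[x/2,x]$ can serve any given configuration, so the sum over $q$ reduces to a count of $k$-tuples of $R$ lying in a progression, which — when $R=\mathcal{R}$ — is bounded below using the fundamental lemma of sieve theory, with the prime divisors of $M$ excluded exactly as in the proof of Lemma~\ref{lem:Rm}, together with a Bombieri--Vinogradov input controlling primes in the progressions $a\pmod q$, $q$ near $x$. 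As $r_0\gg x^{1/2}$, the estimate remains usable well below $r_0$.

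Granting the estimate, the cover is assembled thus. Reserve a subinterval of $\mathcal{I}_m$ holding, say, half of its primes; through the remaining $(1+o(1))\delta r_0/2$ primes run a greedy loop, at each step choosing an unused $q$ together with the class $a_q\pmod q$ containing the most still-uncovered elements of $\mathcal{R}$ and deleting them. By the key estimate each step deletes at least $k\geq 3/\delta$ elements while the uncovered count exceeds $x^{1/2}$, so after at most $r_0/k\leq(\delta/3)r_0$ steps the uncovered set $R^{\ast}$ has $\abs{R^{\ast}}\leq x^{1/2}$, which — as $x>x_0(\delta)$ — is far less than the number $(1+o(1))\delta r_0/2$ of reserved primes. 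Assigning to the reserved primes the classes $a_q\equiv p\pmod q$ with $p$ running injectively over $R^{\ast}$, and arbitrary classes to any reserved primes that remain, completes the cover.

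The step I expect to be the genuine obstacle is the key estimate for a general, thinned $R$: after deletions $R$ is an arbitrary subset of $\mathcal{R}$ and need not be equidistributed modulo the primes of $\mathcal{I}_m$, so the lower bound on $k$-tuples of $R$ in a common progression is not automatic — an adversarial $R$ (for instance one whose small pairwise differences are all free of prime factors in $[x/2,x]$) could avoid such configurations entirely. Making the argument rigorous therefore requires controlling the greedy process, either by randomizing it so that the uncovered set stays pseudorandom with high probability — a R\"odl-nibble / random greedy covering argument in the spirit of Ford--Green--Konyagin--Tao and of Maynard — or by tracking that the sets deleted are unions of residue classes, leaving the uncovered set structured enough for the sieve bound to persist. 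This is the technical heart of the proposition, and where I expect the real work to lie.
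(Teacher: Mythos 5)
Your approach is genuinely different from the paper's, and you have correctly identified where it breaks down. The paper does not attempt a greedy covering at all: it defines a probability measure $\mu_{m,q}(a)$ on residue classes $a\pmod q$ for each $q\in\mathcal{I}_m$, built from Maynard--Tao-type sieve weights $\lambda_{\mathbf d,\mathbf e}$ as in~\eqref{eq:mu}, and chooses each $a_q$ independently at random according to $\mu_{m,q}$. The whole technical content (Lemma~\ref{lem:probsum}, via the singular series computation and Bombieri--Vinogradov) goes into showing that for each $p_0\in\mathcal{R}_m^{(M)}$ away from the edges, the expected number of times $p_0$ is hit satisfies
\begin{equation*}
  \sum_{q\in\mathcal{I}_m}\mu_{m,q}(p_0)\gg\frac{\delta k\,J_k^{(1)}(F)J_k^{(2)}(G)}{I_k^{(1)}(F)I_k^{(2)}(G)}\gg\delta\log k,
\end{equation*}
and Lemma~\ref{lem:k} lets one take $k$ large enough that this exceeds $-\log\varepsilon$. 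Linearity of expectation then gives that the expected number of uncovered $p_0$ is at most $(\varepsilon+o_k(1))\abs{\mathcal{R}_m^{(M)}}$, so some deterministic choice does at least this well, and the leftover is mopped up one prime per reserved $q$ exactly as in your last step. Crucially, because the argument is a single first-moment computation over a fixed random choice, there is never a thinned set $R$ to control: the quantity bounded is $\sum_{p_0}\prod_q(1-\mu_{m,q}(p_0))$, which factors over $q$ and does not require knowing which primes have already been covered.

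The gap in your proposal is exactly the one you flagged, and it is fatal as stated. After a few greedy steps the survivor set $R$ is no longer a sieve-structured set: it is $\mathcal{R}_m^{(M)}$ minus an adversarially chosen union of residue classes to large moduli, and the fundamental lemma gives no lower bound on the number of $k$-tuples of $R$ in a common progression to a modulus $q\in\mathcal{I}_m$. Nothing rules out the greedy deletions leaving behind a set whose surviving pairwise differences avoid prime factors in $[x/2,x]$, at which point your key estimate is false for that $R$, so the loop stalls. Repairing this along the lines you sketch would require a random greedy / R\"odl-nibble argument keeping the survivor set pseudorandom with respect to the moduli $q$ --- essentially the hypergraph covering machinery of Ford--Green--Konyagin--Tao. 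That is a workable but substantially heavier route; Maynard's device, which this paper adapts, of expressing the entire covering count as a single weighted sum amenable to sieve asymptotics sidesteps the iteration entirely, and this is precisely why the proposition is proved through a random choice of $a_q$ rather than a constructive one.
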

\begin{proof}[Proof of Theorem~\ref{thm:main} assuming
  Proposition~\ref{prop:main}]
  By Lemma~\ref{lem:RmSums}, we have
  \begin{equation}
    \sum_{m<U/(z{(\log_2 x)}^2)}\delta\abs{\mathcal{R}_m^{(M)}}\log x
    \ll\delta C_{U}x.
  \end{equation}
  Thus, if $\delta$ is small enough, we can choose the
  $\mathcal{I}_m\subseteq[x/2,x]$ for even $m<Uz^{-1}{(\log_2 x)}^{-2}$ to be
  disjoint. By Proposition~\ref{prop:main}, the primes in those intervals are
  enough to sieve out all the primes in the $\mathcal{R}_m^{(M)}$.  By
  lemmas~\ref{lem:Rm},~\ref{lem:RmSums}~and~\ref{lem:Rnaught}, this shows that
  we can cover all but $o(x/\log x)$ numbers in
  $\mathcal{R}_0^{(M)}\cup\mathcal{R}^{(M)} \cup E_2$ using primes in
  $[x/2,x]$. So using one residue class each for the primes in $[z,x/2]$ is
  sufficient to cover what remains. This proves that $\mathfrak{P}_x^{(M)}$ can
  sieve out $[1,U]$.
\end{proof}
The proof of Proposition~\ref{prop:main} is probabilistic. Assume that for $q
\in \mathcal{I}_m$, we pick a residue class $a \pmod{q}$ with probability
$\mu_{m,q}(a)$. Then the probability that a given $p_0\in\mathcal{R}_m^{(M)}$
is not picked for any $q\in\mathcal{I}_m$ is
\begin{equation}
  \prod_{q\in\mathcal{I}_m}\biggl(1-\mu_{m,q}(p_0)\biggr)
  \leq\exp\biggl(-\sum_{q\in\mathcal{I}_m}\mu_{m,q}(p_0)\biggr).
\end{equation}
If we show that the sum on the right hand side can be made arbitrarily large,
then we can deduce that there's a choice of residue classes $a_q\pmod{q}$ such
that an arbitrarily small portion of the primes in $\mathcal{R}_m^{(M)}$ is
left out.

We put
\begin{equation}
  \begin{split}
    \omega_{m,q}(p)=\#\{&1\leq n\leq p:n+h_{i}q\equiv 0 \pmod{p}\\
    &\text{or }m(n+h_{i}q) \equiv 1 \pmod{p} \text{ for some $i=1,\ldots,k$}\},
  \end{split}
\end{equation}
where $\mathcal{H}={h_1,\ldots,h_k}$ with $h_i=p_{\pi(k)+i}P(w)$ is an
admissible $k$-tuple (recall that $\{h_i\}$ is called admissible if
$\abs{\{h_i\pmod{p}\}}<p$ for all primes $p$). Also let $\varphi_{m,q}$ be the
multiplicative function defined on primes by
$\varphi_{m,q}(p)=p-\omega_{m,q}(p)$. With this, we define the singular series
\begin{equation}\label{eq:ss}
  \mathfrak{S}_{m,q}^{(M)}=\prod_{\substack{p\leq y\\p\nmid M}}
  \left(1-\frac{\omega_{m,q}(p)}{p}\right)
  {\left(1-\frac{1}{p}\right)}^{-2k}
  \prod_{\substack{p\leq y\\p\mid M}}{\left(1-\frac{1}{p}\right)}^{-k}
  \prod_{\substack{p\leq w\\p\mid M}}{\left(1-\frac{1}{p}\right)}^{1-k}.
\end{equation}

We will define $\mu_{m,q}$ by
\begin{equation}\label{eq:mu}
  \mu_{m,q}(a) = \alpha_{m,q} \sum_{
    \substack{
      n \leq U/m\\
      n \equiv a \pmod{q}\\
      (n,P(w))=1\\
      (mn-1,P_M(w))=1
    }
  }
  {\Biggl(
    \sum_{
      \substack{
        d_1,\ldots,d_k\\
        d_i\mid(n+h_{i}q)
      }
    }
    \sum_{
      \substack{
        e_1,\ldots,e_k\\
        e_i\mid m(n+h_{i}q)-1\\
        (e_i,M)=1
      }
    }
    \lambda_{d_1,\ldots,d_k,e_1,\ldots,e_k}
    \Biggr)}^2,
\end{equation}
where $\alpha_{m,q}$ is a normalizing constant, $w=\log_4 x$ and the $\lambda$
are given by
\begin{equation}
  \lambda_{d_1,\ldots,d_k,e_1,\ldots,e_k}=\Bigl(
    \prod_{i=1}^k\mu(d_i)\mu(e_i)\sum_{j=1}^J\Bigl(
      \prod_{\ell=1}^{k}F_{\ell,j}\bigl(\frac{\log{d_\ell}}{\log{x}}\bigr)
      G\bigl(\frac{\log{e_\ell}}{\log{x}}\bigr)
    \Bigr)
  \Bigr)
  \label{eq:lambdadef}
\end{equation}
for some smooth nonnegative functions $F_{i,j}$,
$G:\left[0,\infty\right)\to\mathbb{R}$ which are not identically zero. 
These functions and the parameter $J$ may depend on $k$ but not on $x$ or $q$.
Thus $\abs{\lambda_{d_1,\ldots,d_k,e_1,\ldots,e_k}}\ll_k1$. Also for each
$j=1,\ldots,J$, we require
\begin{equation}
  \sup\bigl\{\sum_{i=1}^{k}u_i:F_{i,j}(u_i)\neq 0\bigr\}\leq 1/10,
\end{equation}
and restrict $G$ to be supported on $[0,1]$. Also put
\begin{equation}
  F(t_1,\ldots,t_k)=\sum_{j=1}^J\prod_{\ell=1}^{k}F_{\ell,j}^\prime(t_\ell),
\end{equation}
and assume that $F_{i,j}$ are chosen so that $F$ is symmetric.

Two things are different in~\eqref{eq:mu} compared to~\cite{Maynard2016Large}.
Firstly, we have the weaker condition $(mn-1,P_M(w))=1$ instead of
$(mn-1,P(w))=1$, reflecting the corresponding condition on the definition of
the $\mathcal{R}_m^{(M)}$. Also, we require that $(e_i,M)=1$ to simplify
certain divisibility conditions that will arise.

\section{Estimations}
To first estimate $\alpha_{m,q}$, we sum~\eqref{eq:mu}  over $a \pmod{q}$ and
rearrange sums to obtain
\begin{equation}
  \alpha_{m.q}^{-1}=
  \sum_{
    \substack{
      d_1,\ldots,d_k\\
      d_1^\prime,\ldots,d_k^\prime
    }
  }
  \sum_{
    \substack{
      e_1,\ldots,e_k\\
      e_1^\prime,\ldots,e_k^\prime
    }
  }
  \lambda_{\mathbf{d},\mathbf{e}}
  \lambda_{\mathbf{d}^\prime,\mathbf{e}^\prime}
  \sum_{
    \substack{
      n \leq U/m\\
      (n,P(w))=1\\
      (mn-1,P_M(w))=1\\
      [d_i,d_i^\prime]\mid n+h_{i}q\\
      [e_i,e_i^\prime]\mid m(n+h_{i}q)-1\\
      (e_{i}e_i^\prime,M)=1
    }
  }
  1.
\end{equation}
Here if $p\mid d_{i}d_i^\prime$, then $p\mid n+h_{i}q$, whence $p>w$ since
$P(w)\mid h_i$ and $(n,P(w))=1$. So we have $(d_{i}d_i^\prime,P(w))=1$ for all
$i$. Similarly if $p\mid e_{i}e_i^\prime$, then $p\nmid M$, and we also have
$p\mid mn - 1 + h_{i}qm$, so $p\nmid P_M(w)$ as well, whence
$(e_{i}e_i^\prime,P(w))=1$ for all $i$. Also, $d_{i}d_i^\prime$ and
$d_{j}d_j^\prime$ are relatively prime for $i\neq j$, since a common divisor
$p$ of both would have to divide $(h_i-h_j)q$, but this is absurd when $p\mid
h_i-h_j$ implies $p\leq w$, but $p\nmid P(w)$ and $p\leq x^{1/10}<q$. Along the
same lines, we see also that $e_{1}e_1^\prime,\ldots,e_{k}e_k^\prime$ are
pairwise relatively prime. Also, we see immediately that if
$p\mid(d_{i}d_i^\prime,e_{i}e_i^\prime)$ then $p\mid mq(h_j-h_i)-1$ and that
$(e_{i}e_i^\prime,M)=1$. Under these restrictions, the inner sum counts the
$n$ satisfying
\begin{gather}
  \begin{alignat}{3}
    n&\not\equiv 0 &&\pmod{p}, &&p\leq w,\\
    mn&\not\equiv 1 &&\pmod{p}, &&p\leq w,\:p\nmid M,\\
    n&\equiv -h_{i}q &&\pmod{[d_i,d_i^\prime]}, &&\forall i,\\
    n&\equiv \overline{m}-h_{i}q &&\pmod{[e_i,e_i^\prime]},\qquad &&\forall i.
  \end{alignat}
\end{gather}
By the Chinese Remainder Theorem, the number of such $n$ in a block of
$P(w)[\mathbf{d},\mathbf{d}^\prime,\mathbf{e},\mathbf{e}^\prime]$ integers is
$\varphi_{m,q}(P_M(w))\varphi(\frac{P(w)}{P_M(w)})$. So we have
\begin{multline}
  \alpha_{m.q}^{-1}=
  \sideset{}{^\prime}\sum_{
    \substack{
      d_1,\ldots,d_k\\
      d_1^\prime,\ldots,d_k^\prime
    }
  }
  \sideset{}{^\prime}\sum_{
    \substack{
      e_1,\ldots,e_k\\
      e_1^\prime,\ldots,e_k^\prime
    }
  }
  \lambda_{\mathbf{d},\mathbf{e}}
  \lambda_{\mathbf{d}^\prime,\mathbf{e}^\prime}\\
  \times\left(
    \frac{U\varphi_{m,q}(P_M(w))\varphi(\frac{P(w)}{P_M(w)})}
    {mP(w)[\mathbf{d},\mathbf{d}^\prime,\mathbf{e},
    \mathbf{e}^\prime]}
    + O\left(
    \varphi_{m,q}(P_M(w))\varphi(\tfrac{P(w)}{P_M(w)})
    \right)
  \right),
\end{multline}
where $\sum^\prime$ denotes the sums with the aforementioned divisibility
conditions. Note that in our case the only extra constraint compared to the
original case is $(e_{i}e_i^\prime,M)=1$.

Using the fact that $\abs{\lambda_{\mathbf{d},\mathbf{e}}}\ll_k1$, and
recalling the support conditions $\prod_{i}d_i<x^{1/10}$ and
$\prod_{i}e_i,y^k\ll x^\varepsilon$, together with the fact that
$\varphi_{m,q}(P_M(w))\varphi(\tfrac{P(w)}{P_M(w)})\leq P(w)\ll\log_3 x$, we
see that the contribution of the error term is at most $\ll x^{1/2}$.

We expand the $\lambda$ using~\eqref{eq:lambdadef}, so that we are left to
evaluate
\begin{equation}\label{eq:quadsum}
  \sum_{j=1}^J\sum_{j^\prime=1}^J
  \sideset{}{^\prime}\sum_{\substack{d_1,\ldots,d_k\\
                                     d^\prime_1,\ldots,d^\prime_k}}
  \sideset{}{^\prime}\sum_{\substack{e_1,\ldots,e_k\\
                                     e^\prime_1,\ldots,e^\prime_k}}
  \frac{
    \prod_{\ell=1}^k\mu(d_\ell)\mu(d^\prime_\ell)\mu(e_\ell)\mu(e^\prime_\ell)
    H_{\ell,j,j^\prime}(d_\ell,d^\prime_\ell,d_\ell,e^\prime_\ell)
  }
  {
    [\mathbf{d},\mathbf{d}^\prime,\mathbf{e},\mathbf{e}^\prime]
  },
\end{equation}
where
\begin{equation}
  H_{\ell,j,j^\prime}(d_\ell,d^\prime_\ell,d_\ell,e^\prime_\ell)
  =F_{\ell,j}\bigl(\frac{\log d_\ell}{\log x}\bigr)
  F_{\ell,j^\prime}\bigl(\frac{\log d^\prime_\ell}{\log x}\bigr)
  G\bigl(\frac{\log e_\ell}{\log y}\bigr)
  G\bigl(\frac{\log e^\prime_\ell}{\log y}\bigr).
\end{equation}
The functions $e^{t}F_{\ell,j}(t)$ can be extended to smooth compactly
supported functions on $\mathbb{R}$, so has a Fourier expansion
$e^{t}F_{\ell,j}(t)=\int_\mathbb{R}e^{-it\xi}f_{\ell,j}(\xi)d\xi$, with
$f_{\ell,j}(\xi)\ll_{k,A}{(1+\abs{\xi})}^{-A}$ rapidly decreasing. Thus
\begin{equation}
  F_{\ell,j}\biggl(\frac{\log d_\ell}{\log x}\biggr)
  =\int_\mathbb{R}
  \frac{f_{\ell,j}(\xi_\ell)}{d_\ell^{(1+i\xi_\ell)/\log x}}d\xi_\ell,
\end{equation}
and similarly for $G$. So we can rewrite the inner two sums
in~\eqref{eq:quadsum} as
\begin{equation}
  \begin{split}
    \int_\mathbb{R}\cdots\int_\mathbb{R}
    \Biggl(
      \sideset{}{^\prime}\sum_{\substack{d_1,\ldots,d_k\\
                                         d^\prime_1,\ldots,d^\prime_k}}
      &\sideset{}{^\prime}\sum_{\substack{e_1,\ldots,e_k\\
                                          e^\prime_1,\ldots,e^\prime_k}}
      \frac{1}{[\mathbf{d},\mathbf{d}^\prime,\mathbf{e},\mathbf{e}^\prime]}
      \prod_{\ell=1}^k\frac{
        \mu(d_\ell)\mu(d^\prime_\ell)\mu(e_\ell)\mu(e^\prime_\ell)
      }{
        d_\ell^{\frac{1+i\xi_\ell}{\log x}}
        {(d^\prime_\ell)}^{\frac{1+i\xi^\prime_\ell}{\log x}}
        e_\ell^{\frac{1+i\tau_\ell}{\log y}}
        {(e^\prime_\ell)}^{\frac{1+i\tau^\prime_\ell}{\log y}}
      }
    \Biggr)\\
    &\times
    \Bigl(
      \prod_{\ell=1}^k
      f_{\ell,j}(\xi_\ell)f_{\ell,j^\prime}(\xi^\prime_\ell)
      g(\tau_\ell)g(\tau^\prime_\ell)
      d\xi_\ell d\xi^\prime_\ell
      d\tau_\ell d\tau^\prime_\ell
    \Bigr),
    \label{eq:intsumprod}
  \end{split}
\end{equation}
and in turn write the sum here as a product $\prod_{p}K_p$, where
\begin{equation}
  \begin{split}
    K_p&=
    \sideset{}{^\prime}\sum_{
      \substack{d_1,\ldots,d_k\\d^\prime_1,\ldots,d^\prime_k}}
    \sideset{}{^\prime}\sum_{
      \substack{e_1,\ldots,e_k\\e^\prime_1,\ldots,e^\prime_k\\
      [\mathbf{d},\mathbf{d}^\prime,\mathbf{e},\mathbf{e}^\prime]\mid p}}
    \frac{1}{[\mathbf{d},\mathbf{d}^\prime,\mathbf{e},\mathbf{e}^\prime]}
    \prod_{\ell=1}^k\frac{
      \mu(d_\ell)\mu(d^\prime_\ell)\mu(e_\ell)\mu(e^\prime_\ell)
    }{
      d_\ell^{\frac{1+i\xi_\ell}{\log x}}
      {(d^\prime_\ell)}^{\frac{1+i\xi^\prime_\ell}{\log x}}
      e_\ell^{\frac{1+i\tau_\ell}{\log y}}
      {(e^\prime_\ell)}^{\frac{1+i\tau^\prime_\ell}{\log y}}
    }\\
    &=1+O_k(p^{-1-1/\log x}),
  \end{split}
\label{eq:kaypee}
\end{equation}
so that $\prod_{p}K_p\ll{(\log x)}^{O_k(1)}$. By the rapid decrease of the
functions $f,g$ we can truncate the integrals to
$\abs{\xi_\ell},\abs{\xi_\ell^\prime},\abs{\tau_\ell},\abs{\tau_\ell^\prime}
\leq {(\log x)}^{1/2}$. We relabel $s_j=(1+i\xi_j)/\log
x,r_\ell=(1+i\tau_\ell)/\log x$, and similarly for $s_j^\prime$,
$r_\ell^\prime$.

Now for $w\leq p\leq y$ with
$p\nmid M\prod_{h,h^\prime\in\mathcal{H}} (mq(h-h^\prime)-1)$, or for $p>y$, we
have
\begin{equation}\label{eq:fullfactor}
  K_p=\Bigl(1+O_k\Bigl(\frac{1}{p^2}\Bigr)\Bigl)\prod_{\ell=1}^k
    \frac{
      \bigl(1-p^{-1-s_\ell}\bigr)
      \bigl(1-p^{-1-s_\ell^\prime}\bigr)
      \bigl(1-p^{-1-r_\ell}\bigr)
      \bigl(1-p^{-1-r_\ell^\prime}\bigr)
    }{
      \bigl(1-p^{-1-s_\ell-s_\ell^\prime}\bigr)
      \bigl(1-p^{-1-r_\ell-r_\ell^\prime}\bigr)
    }
\end{equation}

If $w\leq p\leq y$ with $p\mid \prod_{h,h^\prime\in\mathcal{H}}
(mq(h-h^\prime)-1)$ and $p\nmid M$, we will have an extra factor corresponding
to products of $d_j,e_\ell$ if $p\mid mq(h_\ell-h_j)-1$.
\begin{equation}
  \begin{split}
    (1+O_k\Bigl(\frac{1}{p^2}\Bigr)\Bigl)\prod_{
      j,\ell:p\mid mq(h_\ell-h_j)-1}
      \Bigl(
        1+\frac{1}{p}\sum_{\substack{
            \mathcal{T}\subseteq\{s_j,s_j^\prime,r_\ell,r_\ell^\prime\}\\
            \mathcal{T}\cap\{s_j,s_j^\prime\}\neq\varnothing\\
            \mathcal{T}\cap\{r_\ell,r_\ell^\prime\}\neq\varnothing\\
          }
        }
        {(-1)}^{\#\mathcal{T}}p^{-\sum_{t\in\mathcal{T}}t}
      \Bigr)\\
      =\Bigl(1+\frac{\#\{j,\ell:p\mid mq(h_\ell-h_j)-1\}}{p}\Bigr)
      \Bigl(1+O_k\Bigl(
      \frac{1}{p^2}+\frac{\log p\sqrt{\log x}}{p\log y}\Bigr)\Bigr),
  \end{split}
\end{equation}
where we used the fact that $p^{-\sum t}=1+O((\log p){(\log x)}^{1/2}/(\log y))$
by the truncation of the variables. The first factor here simplifies to
$1-(\omega_{m,q}(p)-2k)/p$.

If $p\mid M$, then due to our constraint $(e_i{e_i}^\prime,M)=1$, we have no
contribution from the $e$'s, so such $p$ contribute a factor of
\begin{equation}
  K_p=\Bigl(1+O_k\Bigl(\frac{1}{p^2}\Bigr)\Bigl)\prod_{\ell=1}^k
    \frac{
      \bigl(1-p^{-1-s_\ell}\bigr)
      \bigl(1-p^{-1-s_\ell^\prime}\bigr)
    }{
      \bigl(1-p^{-1-s_\ell-s_\ell^\prime}\bigr).
    }
\end{equation}

Finally, we can supply the same factors as in~\eqref{eq:fullfactor} for small
primes by noting that
\begin{multline}
  \prod_{p\leq w}{\biggl(1-\frac{1}{p}\biggr)}^{-2k}\prod_{p\leq
  w}\prod_{\ell=1}^k
  \frac{
    \bigl(1-p^{-1-s_\ell}\bigr)
    \bigl(1-p^{-1-s_\ell^\prime}\bigr)
    \bigl(1-p^{-1-r_\ell}\bigr)
    \bigl(1-p^{-1-r_\ell^\prime}\bigr)
  }{
    \bigl(1-p^{-1-s_\ell-s_\ell^\prime}\bigr)
    \bigl(1-p^{-1-r_\ell-r_\ell^\prime}\bigr)
  }\\
  =(1+o_k(1)).
\end{multline}
Putting these together, we find that
\begin{equation}
  \begin{split}
    \prod_{p>w}K_p=\:&(1+o_k(1))\prod_{p\leq w}{\bigl(1-\frac{1}{p}\bigr)}^{-2k}
    \prod_{\substack{w<p\leq y\\p\nmid M}}
    \bigl(1-\frac{\omega_{m,q}(p)-2k}{p}\bigr)\\
    &\times\prod_{\ell=1}^k\frac{
      \zeta\bigl(1+\frac{2+i\xi_\ell+i\xi_\ell^\prime}{\log x}\bigr)
      \zeta\bigl(1+\frac{2+i\tau_\ell+i\tau_\ell^\prime}{\log y}\bigr)
    }{
      \zeta\bigl(1+\frac{1+i\xi_\ell}{\log x}\bigr)
      \zeta\bigl(1+\frac{1+i\xi_\ell^\prime}{\log x}\bigr)
      \zeta\bigl(1+\frac{1+i\xi_\tau}{\log y}\bigr)
      \zeta\bigl(1+\frac{1+i\xi_\tau^\prime}{\log y}\bigr)
    }\\
    &\times\prod_{\substack{w<p\leq y\\p\mid M}}\prod_{\ell=1}^k
    \frac{(1-p^{-1-r_\ell-r_\ell^\prime})}{
      (1-p^{-1-r_\ell})
      (1-p^{-1-r_\ell^\prime})
    }.
  \end{split}
\end{equation}
We see that the last product is
\begin{equation}
  \times\prod_{\substack{w<p\leq y\\p\mid M}}\prod_{\ell=1}^k
  \frac{(1-p^{-1-r_\ell-r_\ell^\prime})(1-p^{-1})}{
    (1-p^{-1-r_\ell})
    (1-p^{-1-r_\ell^\prime})
  }
  \prod_{\substack{w<p\leq y\\p\mid M}}{\left(1-\frac{1}{p}\right)}^{-k},
\end{equation}
and the double product can be written as
\begin{equation}
  \exp\Biggl(\sum_{\substack{w<p\leq y\\p\mid M}}
    \sum_{\ell=1}^k
    p^{-1}(
      -p^{-r_\ell-r_\ell^\prime}
      +p^{-r_\ell}
      +p^{-r_\ell^\prime}
      -1+O(p^{-1})
    )
  \Biggr).
\end{equation}
Since $\abs{r_j},\abs{r_j^\prime}<{(\log x)}^{1/2}/\log y$, we have
\begin{equation}
  p^{-r_\ell-r_\ell^\prime}, p^{-r_\ell}, p^{-r_\ell^\prime}
  =1+O({(\log p)}^2 {(\log x)}^{1/2}/\log y),
\end{equation}
so the double sum is
\begin{equation}
  O_k\biggl(
    \frac{
      {(\log x)}^{1/2}
    }{\log y
    }
    \sum_{\substack{w<p\leq y\\p\mid M}}
    \frac{{(\log p)}^2}{p}
    +o_k(1)
  \biggr).
\end{equation}
By the assumption~\eqref{eq:assum}, we see that this is
$o_k(1)$. Integrating the zeta factors proceeds identically as
in~\cite{Maynard2016Large}, so putting everything together and noting that
\begin{multline}
  \varphi_{m,q}(P_M(w))\varphi(\tfrac{P(w)}{P_M(w)})
  \prod_{p\leq w}{\bigl(1-\frac{1}{p}\bigr)}^{-2k}\\
  \times\prod_{\substack{w<p\leq y\\p\nmid M}}
  \bigl(1-\frac{\omega_{m,q}(p)-2k}{p}\bigr)
  \prod_{\substack{w<p\leq y\\p\mid M}}{\left(1-\frac{1}{p}\right)}^{-k}
  =\mathfrak{S}_{m,q}^{(M)},
\end{multline}
we obtain
\begin{lem}
  We have
  \begin{equation}\label{eq:alphainverse}
    \alpha_{m,q}^{-1}=(1+o(1))\frac{
      U\mathfrak{S}_{m,q}^{(M)}
    }{
      m{(\log x)}^k{(\log y)}^k
    }
    I_k^{(1)}(F)I_k^{(2)}(G),
  \end{equation}
  where $\mathfrak{S}_{m,q}^{(M)}$ is given by~\eqref{eq:ss}, and
  \begin{gather}
    \begin{aligned}
      I_k^{(1)}(F)&=\idotsint_{t_1,\ldots,t_k\geq0}
      {F(t_1,\ldots,t_k)}^2dt_1,\ldots,dt_k,\\
      I_k^{(2)}(G)&={\biggl(\int_0^\infty G^\prime(t)dt\biggr)}^k.
    \end{aligned}
  \end{gather}
\end{lem}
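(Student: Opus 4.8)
The plan is to collect the computations of this section into the stated formula. First I would start from the displayed formula expressing $\alpha_{m,q}^{-1}$ as a double sum over $\mathbf{d},\mathbf{d}^\prime,\mathbf{e},\mathbf{e}^\prime$ of $\lambda_{\mathbf{d},\mathbf{e}}\lambda_{\mathbf{d}^\prime,\mathbf{e}^\prime}$ weighted by the number of $n\leq U/m$ meeting the coprimality and divisibility conditions. Applying the Chinese Remainder Theorem to that inner count over a block of $P(w)[\mathbf{d},\mathbf{d}^\prime,\mathbf{e},\mathbf{e}^\prime]$ integers yields a main term $U\varphi_{m,q}(P_M(w))\varphi(P(w)/P_M(w))\bigl(mP(w)[\mathbf{d},\mathbf{d}^\prime,\mathbf{e},\mathbf{e}^\prime]\bigr)^{-1}$ and an $O\bigl(\varphi_{m,q}(P_M(w))\varphi(P(w)/P_M(w))\bigr)$ error per term. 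Since $\abs{\lambda_{\mathbf{d},\mathbf{e}}}\ll_k1$, the supports force $\prod_id_i<x^{1/10}$ and $\prod_ie_i\ll x^\varepsilon$, and $\varphi_{m,q}(P_M(w))\varphi(P(w)/P_M(w))\leq P(w)\ll\log_3 x$, the total error is $\ll x^{1/2}$, negligible against a main term of order $U/(m(\log x)^k(\log y)^k)$.

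Next I would expand the $\lambda$'s via~\eqref{eq:lambdadef} to reach the quadratic sum~\eqref{eq:quadsum}, extend $e^tF_{\ell,j}(t)$ and $e^tG(t)$ to smooth compactly supported functions, and insert their Fourier expansions, whose coefficients $f_{\ell,j},g$ are rapidly decreasing. This turns the $\mathbf{d},\mathbf{e}$-sum into an integral over frequencies $\xi_\ell,\xi_\ell^\prime,\tau_\ell,\tau_\ell^\prime$ of a multiplicative function of $[\mathbf{d},\mathbf{d}^\prime,\mathbf{e},\mathbf{e}^\prime]$, which factors as $\prod_pK_p$ with $K_p=1+O_k(p^{-1-1/\log x})$; by rapid decay the integrals truncate to $\abs{\xi_\ell},\abs{\xi_\ell^\prime},\abs{\tau_\ell},\abs{\tau_\ell^\prime}\leq(\log x)^{1/2}$. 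I would then evaluate $K_p$ by cases: for $p>y$, and for $w\leq p\leq y$ with $p\nmid M\prod_{h,h^\prime}(mq(h-h^\prime)-1)$, one gets the full factor~\eqref{eq:fullfactor}; for $w<p\leq y$, $p\nmid M$, dividing some $mq(h_\ell-h_j)-1$ there is an extra factor collapsing to $1-(\omega_{m,q}(p)-2k)/p$ up to a relative $O_k(p^{-2}+\log p\,\sqrt{\log x}/(p\log y))$; for $p\mid M$ the constraint $(e_ie_i^\prime,M)=1$ removes the $e$-contribution; and for $p\leq w$ the missing local factors are restored at the cost of a factor $1+o_k(1)$.

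Assembling $\prod_{p>w}K_p$ then gives $(1+o_k(1))$ times the small-prime factors, times $\prod_{w<p\leq y,\,p\nmid M}(1-(\omega_{m,q}(p)-2k)/p)$, times a product of zeta-quotients in the frequency variables, times the correction $\prod_{w<p\leq y,\,p\mid M}\prod_\ell\frac{1-p^{-1-r_\ell-r_\ell^\prime}}{(1-p^{-1-r_\ell})(1-p^{-1-r_\ell^\prime})}$ left by the absent $e$-factors. Incorporating $\prod_{w<p\leq y,\,p\mid M}(1-1/p)^{-k}$, a factor of $\mathfrak{S}_{m,q}^{(M)}$, into this correction, then taking logarithms and using $\abs{r_\ell},\abs{r_\ell^\prime}<(\log x)^{1/2}/\log y$ to linearise, the residual factor becomes $\exp\bigl(O_k\bigl((\log x)^{1/2}(\log y)^{-1}\sum_{w<p\leq y,\,p\mid M}(\log p)^2/p\bigr)+o_k(1)\bigr)$; since $(\log p)^2/p$ decreases for $p\geq w$ the sum is $\ll\omega(M)(\log_5 x)^2/\log_4 x$, while $\log y\gg\log x\log_3 x/\log_2 x$ and the assumption~\eqref{eq:assum} with $M\leq\kappa(\log x)^{1/5}$ forces $\omega(M)=(\log_2 x)^{o(1)}$, so the exponent is $o(1)$ and this factor is $1+o_k(1)$. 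I expect this last point --- verifying that~\eqref{eq:assum} is strong enough to kill the $p\mid M$ correction, the one place the modulus genuinely intervenes --- to need the most care; everything else is Maynard's argument with the primes dividing $M$ merely excluded. With it settled, the displayed identity identifies $\varphi_{m,q}(P_M(w))\varphi(P(w)/P_M(w))$ times the remaining $p\nmid M$ and small-prime factors with $\mathfrak{S}_{m,q}^{(M)}$, and the zeta-integral over the truncated frequencies is evaluated exactly as in~\cite{Maynard2016Large} --- extending to $\mathbb{R}$, using $\zeta(1+s)^{-1}=s+O(s^2)$ near the pole, Fourier inversion and the definitions of $F$, $I_k^{(1)}$, $I_k^{(2)}$ --- contributing $(\log x)^{-k}(\log y)^{-k}I_k^{(1)}(F)I_k^{(2)}(G)$. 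Combining with the factor $U/m$ from the congruence count yields~\eqref{eq:alphainverse}.
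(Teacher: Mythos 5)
Your proposal is correct and follows the paper's own derivation essentially step for step (the lemma in the paper has no separate proof but is stated as the culmination of the Section 3 computation: CRT inner count, Fourier expansion, Euler factorization into $K_p$, case analysis of local factors, and the $p\mid M$ correction). The one place you add detail beyond what the paper writes---bounding $\sum_{w<p\leq y,\,p\mid M}(\log p)^2/p\ll\omega(M)(\log_5 x)^2/\log_4 x$ by monotonicity and then invoking~\eqref{eq:assum} together with $M\leq\kappa(\log x)^{1/5}$ to get $\omega(M)=(\log_2 x)^{o(1)}$---is a sound elaboration of the paper's terse ``by the assumption~\eqref{eq:assum}, we see that this is $o_k(1)$.''
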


Now we can consider the sum
\begin{multline}\label{eq:lem2sum}
  \sum_{q\in\mathcal{I}_m\text{prime}}\mu_{m,q}(p_0)\\
  =\sum_{q\in\mathcal{I}_m\:\text{prime}}\alpha_{m,q} \sum_{
    \substack{
      n \leq U/m\\
      n \equiv p_0 \pmod{q}\\
      (n,P(w))=1\\
      (mn-1,P_M(w))=1
    }
  }
  {\Biggl(
      \sum_{
        \substack{
          d_1,\ldots,d_k\\
          d_i\mid(n+h_{i}q)
        }
      }
      \sum_{
        \substack{
          e_1,\ldots,e_k\\
          e_i\mid m(n+h_{i}q)-1\\
          (e_i,M)=1
        }
      }
      \lambda_{d_1,\ldots,d_k,e_1,\ldots,e_k}
  \Biggr)}^2,
\end{multline}
where $p_0\in\mathcal{R}_m^{(M)}$. We remark that even though our sifting
primes must not divide $M$, since $\mathcal{I}_m\subseteq[x/2,x]$, the $q$
under consideration are larger than $M$, so we needn't impose $q\nmid M$
explicitly.

We minorize this sum by dropping all terms except when $n=p_0-hq$ for some
$h\in\mathcal{H}$ which are clearly in the sum. Thus
\begin{multline}
  \sum_{q\in\mathcal{I}_m\text{prime}}\mu_{m,q}(p_0)\\
  \geq\sum_{h\in\mathcal{H}}
  \sum_{q\in\mathcal{I}_m\:\text{prime}}\alpha_{m,q}
  {\Biggl(
      \sum_{
        \substack{
          d_1,\ldots,d_k\\
          d_i\mid(p_0+(h_{i}-h)q)
        }
      }
      \sum_{
        \substack{
          e_1,\ldots,e_k\\
          e_i\mid m(p_0+(h_{i}-h)q)-1\\
          (e_i,M)=1
        }
      }
      \lambda_{d_1,\ldots,d_k,e_1,\ldots,e_k}
  \Biggr)}^2.
\end{multline}
In turn, we split the sum over $q$ into residue classes modulo $P(w)$ and
obtain
\begin{multline}\label{eq:mulower}
  \sum_{h\in\mathcal{H}}
  \sum_{\substack{
      w_0\pmod{P(w)}\\
      (w_0,P(w))=1
    }
  }\\
  \times\sum_{\substack{
      q\in\mathcal{I}_m\:\text{prime}\\
      q\equiv w_0\pmod{P(w)}
    }
  }\alpha_{m,q}
  {\Biggl(
      \sum_{
        \substack{
          d_1,\ldots,d_k\\
          d_i\mid(p_0+(h_{i}-h)q)
        }
      }
      \sum_{
        \substack{
          e_1,\ldots,e_k\\
          e_i\mid m(p_0+(h_{i}-h)q)-1\\
          (e_i,M)=1
        }
      }
      \lambda_{d_1,\ldots,d_k,e_1,\ldots,e_k}
  \Biggr)}^2.
\end{multline}
We now replace $\alpha_{m,q}$ by an expression with less dependence on $q$.
We note that for $p\leq w$, we have $\omega_{m,q}(p)=1$ or $2$ according as
$p\mid m$ or not, and for $w<p\leq y$, we have $\omega_{m,q}(p)=2k$ if
\begin{equation}
  p\nmid\prod_{h,h^\prime\in\mathcal{H}}(mq(h-h^\prime)-1).
\end{equation}
So,
\begin{align}\label{eq:sing}
  {\left(\mathfrak{S}_{m,q}^{(M)}\right)}^{-1}&=
  \prod_{\substack{p\leq y\\p\nmid M}}
  {\left(1-\frac{\omega_{m,q}(p)}{p}\right)}^{-1}
  {\left(1-\frac{1}{p}\right)}^{2k}
  \prod_{\substack{p\leq y\\p\mid M}}{\left(1-\frac{1}{p}\right)}^{k}
  \prod_{\substack{p\leq w\\p\mid M}}{\left(1-\frac{1}{p}\right)}^{k-1}\\
  &\geq(1+o_k(1))\mathfrak{S}_m^{(M)}
  \prod_{\substack{
      w<p\leq y\\
      p\nmid M\\
      p\mid\prod\limits_{h^\prime,h^{\prime\prime}}
      (mq(h^\prime-h^{\prime\prime})-1)
    }
  }{\left(1-\frac{1}{p}\right)}^{2k},
\end{align}
where
\begin{equation}
  \mathfrak{S}_m^{(M)}=2^{-(2k-1)}
  {\left(\frac{\varphi(M)}{M}\right)}^k
  \prod_{\substack{2<p\\p\nmid M\\p\mid m}}\left(\frac{p-2}{p-1}\right)
  \prod_{\substack{2<p\leq w\\p\nmid M}}{\left(1-\frac{1}{p}\right)}^{2k}
  {\left(1-\frac{2}{p}\right)}^{-1}.
\end{equation}
We restrict the primes occuring in the last product in~\eqref{eq:sing} to be
less than $z_0=\log x\log_3x/\log_2x$ at a cost of $(1+o_k(1))$ and expand the
product to obtain
\begin{equation}
  {\left(\mathfrak{S}_{m,q}^{(M)}\right)}^{-1}\geq(1+o_k(1))
  \mathfrak{S}_m^{(M)}\sum_{\substack{
      a_{1,2},\ldots,a_{k,k-1}\mid P_M(z_0)/P_M(w)\\
      a_{i,j}\mid mq(h_i-h_j)-1
    }
  }
  \frac{{(-2k)}^{\omega([\mathbf{a}])}}{[\mathbf{a}]},
\end{equation}
where $[\mathbf{a}]=[a_{1,2},\ldots,a_{k,k-1}]$. Substituting this and
\eqref{eq:alphainverse} in~\eqref{eq:mulower}, we obtain
\begin{equation}\label{eq:bigsum2}
  \begin{split}
  \sum_{q\in\mathcal{I}_m\text{prime}}&\mu_{m,q}(p_0)\geq
  \frac{
    (1+o(1))
    \mathfrak{S}_m^{(M)}
    m{(\log x)}^k{(\log y)}^k
  }{
    UI_k^{(1)}(F)I_k^{(2)}(G)
  }\\
  &\times
  \sum_{h\in\mathcal{H}}
  \sum_{\substack{
      w_0\pmod{P(w)}\\
      (w_0,P(w))=1
    }
  }
  \sum_{a_{1,2},\ldots,a_{k,k-1}\mid P_M(z_0)/P_M(w)}
  \frac{{(-2k)}^{\omega([\mathbf{a}])}}{[\mathbf{a}]},
  \\
  &\times\sum_{\substack{
      q\in\mathcal{I}_m\:\text{prime}\\
      q\equiv w_0\pmod{P(w)}\\
      a_{i,j}\mid mq(h_i-h_j)-1
    }
  }
  {\Biggl(
      \sum_{
        \substack{
          d_1,\ldots,d_k\\
          d_i\mid(p_0+(h_{i}-h)q)
        }
      }
      \sum_{
        \substack{
          e_1,\ldots,e_k\\
          e_i\mid m(p_0+(h_{i}-h)q)-1\\
          (e_i,M)=1
        }
      }
      \lambda_{d_1,\ldots,d_k,e_1,\ldots,e_k}
  \Biggr)}^2.
  \end{split}
\end{equation}
We consider the sum over $q$. We suppose $h$ in the outer sum is $h_k$, without
any loss of generality. By the support of $F$ and the fact that $p_0>x$, we
must have $d_k=1$, and similarly $e_k=1$. We expand the square and rearrange
the sums to find that the sum over $q$ equals
\begin{equation}\label{eq:doublesum}
  \sum_{\substack{
      d_1,\ldots,d_k\\
      d_1^\prime,\ldots,d_k^\prime\\
      d_k=d_k^\prime=1
    }
  }
  \sum_{\substack{
      e_1,\ldots,e_k\\
      e_1^\prime,\ldots,e_k^\prime\\
      e_k=e_k^\prime=1
    }
  }
  \lambda_{\mathbf{d},\mathbf{e}}\lambda_{\mathbf{d}^\prime,\mathbf{e}^\prime}
  \sum_{\substack{
      q\in\mathcal{I}_m\:\text{prime}\\
      q\equiv w_0\pmod{P(w)}\\
      a_{i,j}\mid mq(h_i-h_j)-1\\
      [d_i,d_i^\prime]\mid p_0+(h_i-h_k)q\\
      [e_i,e_i^\prime]\mid mp_0+m(h_i-h_k)q-1\\
      ((e_i,e_i^\prime),M)=1
    }
  }
  1.
\end{equation}
We see that if $p\mid{d_i}d_i^\prime$ for some $p\leq w$, then
$[d_i,d_i^\prime]\mid p_0+(h_i-h_k)q$ would imply $p\mid p_0$, an absurdity.
Thus we have ${d_i}d_i^\prime$ relatively prime to $P(w)$ for all $i$. Also, if
$p\mid({d_i}d_i^\prime,{d_j}d_j^\prime)$, this would imply $p\mid q(h_i-h_j)$,
but $h_i-h_j$ only has prime divisors not exceeding $w$, and $p<x^{1/10}<q$, so
we also have $({d_i}d_i^\prime,{d_j}d_j^\prime)=1$ for all $i\neq j$.
Similarly, if $p\mid {e_i}e_i^\prime$ with $p\leq w$, then since $p\mid
h_i-h_k$, then necessarily $p\mid mp_0-1$, but $p_0$, being in
$\mathcal{R}_m^{(M)}$, is relatively prime to $P_M(w)$, so since by assumption
$p\nmid M$, we have $({e_i}e_i^\prime,P(w))=1$ for all $i$. Similarly it is
easy to see that $({e_i}e_i^\prime,{e_j}e_j^\prime)=1$ for $i\neq j$. Plainly
$(a_{i,j},m)=1$ for all $i\neq j$, and any prime dividing
$(a_{i,j},a_{i^\prime,j^\prime})$ would have to divide
$q(h_i+h_{j^\prime}-(h_{i^\prime}+h_j))$, but the latter has no prime divisors
in $[w,z_0]$, so the $a_{i,j}$ are also pairwise coprime.  We have the
compatibility conditions
\begin{gather}\label{eq:constraints}
  \begin{alignat}{2}
  &({d_i}d_i^\prime,{e_j}e_j^\prime)\mid mp_0(h_i-h_j)+h_k-h_i
  &\forall i,j,\\
  &({d_i}d_i^\prime,a_{j,\ell})\mid (h_j-h_\ell)mp_0+h_i-h_k&\forall i,j,\ell,\\
  &({e_i}e_i^\prime,a_{j,\ell})\mid (h_j-h_\ell)(1-p_0m)-h_i+h_k
  &\forall i,j,\ell,
\end{alignat}
\end{gather}
under which the inner sum counts primes in a single residue class modulo the
least common multiple of $d_1,d_1^\prime,e_1,e_1^\prime,
\ldots,d_k,d_k^\prime,e_k,e_k^\prime,a_{1,2}, \ldots,a_{k,k-1},P(w)$. With this
we see that the inner sum in~\eqref{eq:doublesum} is
\begin{equation}\label{eq:primecount}
  \frac{
    \sum_{q\in\mathcal{I}_m\:\text{prime}}1
  }
  {
    \varphi(P(w))\varphi([
    \mathbf{d},\mathbf{d}^\prime,\mathbf{e},\mathbf{e}^\prime,\mathbf{a}])
  }
  +E(x;P(w)[
  \mathbf{d},\mathbf{d}^\prime,\mathbf{e},\mathbf{e}^\prime,\mathbf{a}]).
\end{equation}
Summing the error terms is handled by a standard application of the
Bombieri-Vinogradov theorem. With this and the fact that the number of primes
in $\mathcal{I}_m$ is $(1+o(1))\abs{\mathcal{I}_m}/\log x$, we see
that~\eqref{eq:doublesum} simplifies to
\begin{gather}\label{eq:bigsum}
  \begin{aligned}
    &\!\!\!\!\!\!\!\!\!\!\!\frac{
      (1+o(1))\abs{\mathcal{I}_m}
    }{
      \varphi(P(w))\log x
    }
    \sideset{}{^*}\sum_{\substack{
        d_1,\ldots,d_k\\
        d_1^\prime,\ldots,d_k^\prime\\
        d_k=d_k^\prime=1
      }
    }
    \sideset{}{^*}\sum_{\substack{
        e_1,\ldots,e_k\\
        e_1^\prime,\ldots,e_k^\prime\\
        e_k=e_k^\prime=1
      }
    }
    \frac{
      \lambda_{\mathbf{d},\mathbf{e}}
      \lambda_{\mathbf{d}^\prime,\mathbf{e}^\prime}
    }{
      \varphi([
      \mathbf{d},\mathbf{d}^\prime,\mathbf{e},\mathbf{e}^\prime,\mathbf{a}
  ])
    }\\
    =&\frac{
      (1+o(1))\abs{\mathcal{I}_m}{G(0)}^2
    }{
      \varphi(P(w))\log x
    }
    \sum_{j=1}^J\sum_{j^\prime=1}^J
    F_{k,j}(0)F_{k,j^\prime}(0)
    \sideset{}{^{**}}\sum_{\substack{d_1,\ldots,d_{k-1}\\
    d^\prime_1,\ldots,d^\prime_{k-1}}}
    \sideset{}{^{**}}\sum_{\substack{e_1,\ldots,e_{k-1}\\
    e^\prime_1,\ldots,e^\prime_{k-1}}}\\
    \quad&\times\frac{
      \prod_{\ell=1}^k\mu(d_\ell)\mu(d^\prime_\ell)\mu(e_\ell)\mu(e^\prime_\ell)
      F_{\ell,j}\bigl(\frac{\log d_\ell}{\log x}\bigr)
      F_{\ell,j^\prime}\bigl(\frac{\log d_\ell^\prime}{\log x}\bigr)
      G\bigl(\frac{\log e_\ell}{\log y}\bigr)
      G\bigl(\frac{\log e_\ell^\prime}{\log y}\bigr)
    }
    {
      \varphi([
          \mathbf{d},\mathbf{d}^\prime,\mathbf{e},\mathbf{e}^\prime,\mathbf{a}
      ])
    },
  \end{aligned}
\end{gather}
where $\sum^*$ indicates the divisibility conditions together with the
condition $d_k=d_k^\prime=e_k=e_k^\prime$, and $\sum^{**}$ the same with the
latter dropped. One difference in our case is that the restrictions in the sums
include $({e_i}e_i^\prime,M)=1$, and the $a_{i,j}$ also carry the restriction
$(a_{i,j},M)=1$.

We handle the sums over the $d_i,d_i^\prime,e_i,e_i^\prime$ as before by first
factorizing into prime factors $K_p$. The presence of the Euler
$\varphi$-function in the denominator effects a factor of $(1+O(p^{-2}))$ in
each $K_p$ so the difference is negligible. Let us first suppose that all the
$a_{i,j}$ are $1$. This time the primes that contribute mixed factors involving
$p\mid(d_j,e_\ell)$ must divide
$\prod_{h,h^\prime\in\mathcal{H}}(mp_0(h-h^\prime)+h_k-h)$.  So for $w\leq
p\leq y$ with $p\mid\prod_{h,h^\prime\in\mathcal{H}} (mp_0(h-h^\prime)+h_k-h)$
and $p\nmid M$, there is the contribution from products of $d_j$, $e_\ell$ if
$p\mid(mp_0(h_j-h_\ell)+h_k-h_j)$.
\begin{equation}
  \begin{split}
    (1+O_k\Bigl(\frac{1}{p^2}\Bigr)\Bigl)\prod_{
      j,\ell:p\mid(mp_0(h_j-h_\ell)+h_k-h_j)}
      \Bigl(
        1+\sum_{\substack{
            \mathcal{T}\subseteq\{s_j,s_j^\prime,r_\ell,r_\ell^\prime\}\\
            \mathcal{T}\cap\{s_j,s_j^\prime\}\neq\varnothing\\
            \mathcal{T}\cap\{r_\ell,r_\ell^\prime\}\neq\varnothing\\
          }
        }
        {(-1)}^{\#\mathcal{T}}p^{-\sum_{t\in\mathcal{T}}t}
      \Bigr)\\
      =\Bigl(1+\frac{\#\{j,\ell:p\mid(mp_0(h_j-h_\ell)+h_k-h_j)\}}{p}\Bigr)
      \Bigl(1+O_k\Bigl(
      \frac{1}{p^2}+\frac{\log p\sqrt{\log x}}{p\log y}\Bigr)\Bigr),
  \end{split}
\end{equation}
where this time the first factor simplifies to $1-(\omega^\prime_{m,p_0,h_k}
(p)-(2k-2))/p$,
with
\begin{equation}
  \begin{split}
    \omega^\prime_{m,p_0,h}=\#\{1&\leq n\leq p:p_0+(h_i-h)n\equiv0\pmod{p}\\
    &\text{or }m(p_0+(h_i-h)n)\equiv1\pmod{p}\text{ for some $i$}\}.
  \end{split}
\end{equation}
The other ranges contribute the same factors as before, so for
$a_{1,2}=\cdots=a_{k,k-1}=1$, the contribution to~\eqref{eq:bigsum} is
\begin{gather}\label{eq:secondasymp}
  \frac{
    (1+o(1))\mathfrak{S}_{m,p_0,h_k}^{(M)}\abs{\mathcal{I}_m}{G(0)}^2
  }{
    \varphi(P(w)){(\log x)}^k{(\log y)}^{k-1}
  }
  \sum_{j=1}^J\sum_{j^\prime=1}^J
  F_{k,j}(0)F_{k,j^\prime}(0)
  {\biggl(
      \int_0^\infty{G^\prime(t)}^2dt
  \biggr)}^{k-1}\\
  \begin{aligned}\nonumber
    &\times\prod_{\ell=1}^{k-1}
    \int_0^\infty F_{\ell,j}^\prime(t)F_{\ell,j^\prime}^\prime(t)dt\\
    =&\frac{
      (1+o(1))\mathfrak{S}_{m,p_0,h_k}^{(M)}\abs{\mathcal{I}_m}
    }{
      \varphi(P(w)){(\log x)}^k{(\log y)}^{k-1}
    }
    J_k^{(1)}(F)J_k^{(2)}(G),
  \end{aligned}
\end{gather}
where
\begin{multline}
  \mathfrak{S}_{m,p_0,h}^{(M)}=\prod_{p\leq w}{\biggl(
      1-\frac{1}{p}
  \biggr)}^{-(2k-2)}
  \prod_{\substack{w<p\leq y\\p\nmid M}}{
    \biggl(1-\frac{\omega_{m,p_0,h}^\prime(p)}{p}\biggr)
    {\biggl(1-\frac{1}{p}\biggr)}^{-(2k-2)}
  }\\
  \times\prod_{\substack{w<p\leq y\\p\mid M}}{
    {\biggl(1-\frac{1}{p}\biggr)}^{-(k-1)}
  }.
\end{multline}
Now if not all $a_{i,j}=1$, the presence of the $\mathbf{a}$ in the denominator
means that for any $p\mid a_{i,j}$, we have $K_p\ll_{k}p^{-1}$. Therefore the
contribution of the terms $a_{1,2},\ldots,a_{k,k-1}\neq1,\ldots,1$ is
\begin{gather}
  \begin{aligned}
    \ll_k\sum_{a_{1,2},\ldots,a_{k,k-1}\mid P_M(z_0)/P_M(w)}
    &\frac{{(-2k)}^{\omega([\mathbf{a}])}}{[\mathbf{a}]}\\
    &\times\Biggl(\frac{
        (1+o(1))\mathfrak{S}_{m,p_0,h_k}^{(M)}\abs{\mathcal{I}_m}
      }{
        \varphi(P(w)){(\log x)}^k{(\log y)}^{k-1}
      }
      J_k^{(1)}(F)J_k^{(2)}(G)
      \prod_{p\mid\prod a_{i,j}}
    \frac{O_k(1)}{p}\Biggr)\\
    &\ll_k
    \frac{
        (1+o(1))\mathfrak{S}_{m,p_0,h_k}^{(M)}\abs{\mathcal{I}_m}
      }{
        \varphi(P(w)){(\log x)}^k{(\log y)}^{k-1}
      }
      \Biggl(\prod_{w<p\leq z_0}\biggl(1+\frac{O_k(1)}{p^2}\biggr)-1\Biggr)\\
      &=o_k\biggl(
    \frac{
        \mathfrak{S}_{m,p_0,h_k}^{(M)}\abs{\mathcal{I}_m}
      }{
        \varphi(P(w)){(\log x)}^k{(\log y)}^{k-1}
      }
    \biggr),
  \end{aligned}
\end{gather}
which will be negligible. Using~\eqref{eq:secondasymp} in~\eqref{eq:bigsum2},
we obtain
\begin{gather}
  \begin{aligned}
    \sum_{q\in\mathcal{I}_m\text{prime}}\mu_{m,q}(p_0)
    \geq&\frac{
      (1+o(1))m
      (\log y)
      \abs{\mathcal{I}_m}
      J_k^{(1)}(F)J_k^{(2)}(G)
    }{
      U\varphi(P(w))
      I_k^{(1)}(F)I_k^{(2)}(G)
    }\\
    &\times
    \sum_{h\in\mathcal{H}}\mathfrak{S}_m^{(M)}\mathfrak{S}_{m,p_0,h_k}^{(M)}
    \sum_{\substack{w_0\pmod{P(w)}\\(w_0,P(w))=1}}1.
  \end{aligned}
\end{gather}
The inner sum is clearly $\varphi(P(w))$, and we have the bound
$\omega^\prime_{m,p_0,h}(p)\leq 2k-2$ uniformly in $h$. Thus
\begin{gather}
  \begin{aligned}
    \mathfrak{S}_{m,p_0,h_k}^{(M)}\mathfrak{S}_m^{(M)}
    =&2^{-1}{\biggl(\frac{\varphi(M)}{M}\biggr)}^k
    \prod_{\substack{2<p\\p\nmid M\\p\mid m}}
    \frac{p-2}{p-1}
    \prod_{\substack{2<p\leq w\\p\mid M}}{\biggl(1-\frac{1}{p}\biggr)}^{-(2k-2)}
    \prod_{\substack{2<p\leq w\\p\nmid M}}
    \frac{
      {\bigl(1-\frac{1}{p}\bigr)}^2
    }{
      \bigl(1-\frac{2}{p}\bigr)
    }\\
    &\times
    \prod_{\substack{w<p\leq y\\p\nmid M}}
    \biggl(1-\frac{\omega^\prime_{m,p_0,h}(p)}{p}\biggr)
    {\biggl(1-\frac{1}{p}\biggr)}^{-(2k-2)}
    \prod_{\substack{w<p\leq y\\p\mid M}}
    {\biggl(1-\frac{1}{p}\biggr)}^{-(k-1)}\\
    &\gg(1+o_k(1))\frac{\varphi(M)}{M}
    \prod_{\substack{p>2\\p\nmid M}}\frac{{(p-1)}^2}{p(p-2)}
    \prod_{\substack{2<p\\p\nmid M\\p\mid m}}
    \frac{p-2}{p-1}\\
    &\gg(1+o(1))\frac{2e^{-\gamma}U}{m(\log x)(\log y)\abs{\mathcal{R}_m^{(M)}}}
  \end{aligned}
\end{gather}
by Lemma~\ref{lem:Rm}. This gives
\begin{lem}\label{lem:probsum}
  Let $m<Uz^{-1}{(\log x)}^{-2}$, and let $p_0\in\mathcal{R}_m^{(M)}$
  with $h_{k}x<p_0<U/m-h_{k}x$. Then
  \begin{equation}
    \sum_{q\in\mathcal{I}_m\text{prime}}\mu_{m,q}(p_0)
    \gg\frac{
      (1+o(1))
      k\abs{\mathcal{I}_m}
    }{
      (\log x)\abs{\mathcal{R}_m^{(M)}}
    }\\
    \cdot\frac{J_k^{(1)}(F)J_k^{(2)}(G)}{I_k^{(1)}(F)I_k^{(2)}(G)}
  \end{equation}
  where
  \begin{gather}
    \begin{aligned}
      J_k^{(1)}(F)&=\idotsint_{t_1,\ldots,t_{k-1}\geq0}
      {\biggl(\int_{t_k\geq0}F(t_1,\ldots,t_k)dt_k\biggr)}^2
      dt_1,\ldots,dt_{k-1},\\
      J_k^{(2)}(G)&={G(0)}^2{\biggl(\int_0^\infty G^\prime(t)dt\biggr)}^{k-1}.
    \end{aligned}
  \end{gather}
\end{lem}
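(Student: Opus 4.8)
The plan is to push the probabilistic computation of Section~3 to its conclusion: every ingredient has been prepared, so the work is to assemble them in order while tracking the constants and the dependence on $M$. First I would unfold the definition~\eqref{eq:mu} of $\mu_{m,q}(p_0)$, sum over the primes $q\in\mathcal{I}_m$, and minorize the resulting double sum by keeping only the terms with $n=p_0-hq$ for some $h\in\mathcal{H}$. Because $P(w)\mid h$ and $p_0\in\mathcal{R}_m^{(M)}$, every such $n$ automatically satisfies $(n,P(w))=1$ and $(mn-1,P_M(w))=1$, while the hypothesis $h_kx<p_0<U/m-h_kx$ together with $q\in[x/2,x]$ forces $1\le n\le U/m$; so these terms genuinely occur and the minorization is legitimate. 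Splitting the $q$-sum into primitive residues modulo $P(w)$, I would then replace $\alpha_{m,q}$ by its evaluation~\eqref{eq:alphainverse} and use the lower bound~\eqref{eq:sing} for ${(\mathfrak{S}_{m,q}^{(M)})}^{-1}$, which confines the remaining $q$-dependence to divisibility conditions $a_{i,j}\mid mq(h_i-h_j)-1$ with $[\mathbf{a}]\mid P_M(z_0)/P_M(w)$ and $(a_{i,j},M)=1$; this is precisely~\eqref{eq:bigsum2}.

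The substantive step is the inner sum over $q$ in~\eqref{eq:bigsum2}. Expanding the square and using that the support of the $F_{\ell,j}$ together with $p_0>x$ forces $d_k=d_k^\prime=e_k=e_k^\prime=1$, the divisibility conditions and the compatibility relations~\eqref{eq:constraints} reduce the $q$-sum to a count of primes $q\in\mathcal{I}_m$ in a single residue class to a modulus $P(w)[\mathbf{d},\mathbf{d}^\prime,\mathbf{e},\mathbf{e}^\prime,\mathbf{a}]$; this modulus stays well below $x^{1/2}$, so by Bombieri--Vinogradov the error terms sum to a negligible amount, and the main term is $(1+o(1))\abs{\mathcal{I}_m}\big/\big(\varphi(P(w))(\log x)\varphi([\mathbf{d},\mathbf{d}^\prime,\mathbf{e},\mathbf{e}^\prime,\mathbf{a}])\big)$. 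The remaining multiplicative sum factors as an Euler product $\prod_p K_p$ handled as in the computation of $\alpha_{m,q}^{-1}$: the diagonal $\mathbf{a}=\mathbf{1}$ produces the $\zeta$-quotient integrals, which after integrating out the $t_k$-coordinate (forced by $d_k=d_k^\prime=1$) and replacing the $k$-th $e$-factor by $G(0)^2$ (forced by $e_k=e_k^\prime=1$) become $J_k^{(1)}(F)J_k^{(2)}(G)$ times the singular constant $\mathfrak{S}_{m,p_0,h_k}^{(M)}$; the primes $p\mid M$ contribute only through the $d$'s, since $(e_ie_i^\prime,M)=1$, and the resulting product over $p\mid M$ is $1+o(1)$ by~\eqref{eq:assum}, exactly as before; and the terms with some $a_{i,j}>1$ carry a factor $\ll_k 1/p$ from $\mathbf{a}$ in the denominator, hence sum to $o_k(1)$ times the main term. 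This yields~\eqref{eq:secondasymp}.

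Finally I would collect. Substituting~\eqref{eq:secondasymp} into~\eqref{eq:bigsum2}, the inner sum over $w_0$ contributes $\varphi(P(w))$, which cancels the matching factor coming from~\eqref{eq:alphainverse}; the sum over $h\in\mathcal{H}$ multiplies by $k=\abs{\mathcal{H}}$, using the uniform bound $\omega^\prime_{m,p_0,h}(p)\le 2k-2$ so that every summand has the same order; and the product $\mathfrak{S}_m^{(M)}\mathfrak{S}_{m,p_0,h_k}^{(M)}$ is bounded below by $(1+o(1))2e^{-\gamma}U\big/\big(m(\log x)(\log y)\abs{\mathcal{R}_m^{(M)}}\big)$ via Lemma~\ref{lem:Rm}. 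The powers of $m$, $U$, $\log x$ and $\log y$ then cancel, leaving the claimed bound $\gg(1+o(1))k\abs{\mathcal{I}_m}J_k^{(1)}(F)J_k^{(2)}(G)\big/\big((\log x)\abs{\mathcal{R}_m^{(M)}}I_k^{(1)}(F)I_k^{(2)}(G)\big)$.

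The one genuinely delicate point---the rest being an adaptation of Maynard's argument---is the uniformity in $M$. One has to check that each Euler product over primes dividing $M$ that appears (these arise precisely because the constraint $(e_ie_i^\prime,M)=1$ suppresses the $e$-contribution at such primes, forcing the compensating $(1-1/p)^{-k}$ and $(1-1/p)^{1-k}$ factors in $\mathfrak{S}_{m,q}^{(M)}$ and their analogues in $\mathfrak{S}_m^{(M)}$ and $\mathfrak{S}_{m,p_0,h}^{(M)}$) is $1+o(1)$ under hypothesis~\eqref{eq:assum}, and that the range $M\le\kappa{(\log x)}^{1/5}$ stays compatible with the appeals to Linnik's theorem and to Bombieri--Vinogradov with errors that remain negligible after summation.
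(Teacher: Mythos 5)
Your proposal is correct and follows the same path as the paper's own argument: minorize the sum by restricting to $n=p_0-hq$, split the $q$-sum modulo $P(w)$, invoke~\eqref{eq:alphainverse} and the lower bound~\eqref{eq:sing}, count primes in residue classes via Bombieri--Vinogradov, factor the Euler products to reach~\eqref{eq:secondasymp}, dispose of the $\mathbf{a}\neq\mathbf{1}$ terms, and finally combine the $\varphi(P(w))$ cancellation, the factor of $k$ from the $h$-sum, and Lemma~\ref{lem:Rm} to produce the stated bound. The only slip is cosmetic: the $\varphi(P(w))$ that cancels against the $w_0$-sum comes from the prime-count denominator in~\eqref{eq:primecount} via~\eqref{eq:secondasymp}, not from~\eqref{eq:alphainverse}.
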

We reproduce here Lemma 8 of~\cite{Maynard2016Large}:
\begin{lem}\label{lem:k}
  There exists a choice of smooth functions $F$, $G$ such that
  \begin{equation}
    \frac{kJ_k^{(1)}(F)J_k^{(2)}(G)}{I_k^{(1)}(F)I_k^{(2)}(G)}\gg\log k.
  \end{equation}
\end{lem}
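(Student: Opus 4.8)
The plan is to optimise the $F$- and $G$-parts separately and to recognise the $F$-part as the variational quantity of the Maynard--Tao method. Writing
\[
  \frac{kJ_k^{(1)}(F)J_k^{(2)}(G)}{I_k^{(1)}(F)I_k^{(2)}(G)}
  =\frac{J_k^{(2)}(G)}{I_k^{(2)}(G)}\cdot\frac{kJ_k^{(1)}(F)}{I_k^{(1)}(F)},
\]
one can treat the two factors independently. For the $G$-factor, fix a single admissible $G$ once and for all, independently of $k$; the $k$-th powers then cancel and $J_k^{(2)}(G)/I_k^{(2)}(G)$ is a positive constant depending only on $G$, bounded away from $0$ uniformly in $k$. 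So everything reduces to producing an admissible symmetric $F$ with $kJ_k^{(1)}(F)/I_k^{(1)}(F)\gg\log k$.

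The first point to note is that, since each $F_{\ell,j}$ is smooth, nonnegative and vanishes at and beyond the right end of its support, one may take $F$ to be of one sign: writing $F'_{\ell,j}=-p_{\ell,j}$ with $p_{\ell,j}\ge0$ smooth and supported on $[0,b_{\ell,j}]$, one has $F_{\ell,j}(t)=\int_t^{b_{\ell,j}}p_{\ell,j}\ge0$, so all the requirements are met, and $(-1)^kF=\Phi:=\sum_j\prod_\ell p_{\ell,j}\ge0$. Then $I_k^{(1)}(F)=\int_{t\ge0}\Phi^2$, $J_k^{(1)}(F)=\int_{t_1,\dots,t_{k-1}\ge0}(\int_{t_k\ge0}\Phi\,dt_k)^2$, and $kJ_k^{(1)}(F)=\sum_{m=1}^kJ_k^{(m)}(F)$ by symmetry, so the ratio to be bounded from below is precisely the Maynard--Tao ratio $\sum_mJ_k^{(m)}(\Phi)/I_k^{(1)}(\Phi)$, now for $\Phi$ a nonnegative function supported in the simplex $\{t_\ell\ge0,\ \sum_\ell t_\ell\le1/10\}$ and of the restricted shape $\sum_j\prod_\ell p_{\ell,j}$. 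Every nonnegative continuous function on that simplex which vanishes on its outer face is a limit — uniformly, and in the $L^2$ and marginal norms that govern $I_k^{(1)}$ and $J_k^{(1)}$ — of finite sums of tensor products of nonnegative smooth compactly supported one-variable functions whose supporting boxes $\prod_\ell[0,b_{\ell,j}]$ lie inside the simplex (such a box automatically satisfies $\sum_\ell b_{\ell,j}\le1/10$); hence the supremum of the ratio over admissible $F$ differs from the unrestricted supremum studied by Maynard only by a constant factor coming from the rescaling between the simplices $\sum t_\ell\le1$ and $\sum t_\ell\le1/10$.

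It therefore suffices to take for $\Phi$ an approximation of the test function of Maynard's small-gaps paper, reproduced in \cite{Maynard2016Large}: a symmetrisation of $\mathbf 1[t_1+\cdots+t_k\le1/10]\prod_{i=1}^kg(kt_i)$, with $g(\xi)=(1+c\,\xi\log k)^{-1}$ smoothly truncated to $[0,T]$, where the constant $c$ and $T\asymp k/(c\log k)$ are chosen so that the cutoff is made at roughly twice the mean of $t_1+\cdots+t_k$. For the pure product $\prod_ig(kt_i)$ a direct computation gives $\sum_mJ_k^{(m)}/I_k^{(1)}=(\int_0^Tg)^2/\int_0^Tg^2\asymp\log k$; the only effect of the truncation, which is forced on us by the admissible range of the $d_i$, is to discard the mass with $t_1+\cdots+t_k$ large. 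The heart of the matter — and the step I expect to be the only real obstacle — is to show that this truncation costs at most a factor $(\log\log k)^{O(1)}$: a concentration estimate for $t_1+\cdots+t_k$ under the product measures $\prod g(kt_i)^2\,dt$ and $\prod g(kt_i)\,dt$, whose means lie (by the choice of $c$) a bounded factor below the cutoff, so that the ratio is still $(1+o(1))\log k\gg\log k$ for $k$ large. I would import this estimate from \cite{Maynard2016Large}; the factorisation, the handling of the $G$-factor, the reduction to the Maynard--Tao ratio, and the density argument putting $\Phi$ in the required tensor-sum form are all routine.
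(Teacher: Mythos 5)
The paper does not prove this lemma at all: it simply reproduces Lemma~8 of Maynard's \emph{Large gaps between primes}~\cite{Maynard2016Large} and relies on the proof given there. Your proposal therefore reconstructs an external proof rather than matching one in this paper, and the outline is indeed Maynard's: factor out $G$ (the ratio $J_k^{(2)}(G)/I_k^{(2)}(G)=G(0)^2/\int_0^\infty G'(t)^2\,dt$ is a positive constant independent of $k$ once one corrects the typo in the displayed definition of $I_k^{(2)}$, where $\int G'$ should read $\int (G')^2$ as in~\eqref{eq:secondasymp}), reduce the $F$-part to the Maynard--Tao variational quantity, and feed in the small-gaps test function. Two soft spots in your write-up. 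First, the density claim --- that every nonnegative continuous function on the simplex vanishing on the outer face is a limit, in the $L^2$ and marginal norms, of sums of nonnegative smooth tensor products whose supporting boxes $\prod_\ell[0,b_{\ell,j}]$ satisfy $\sum_\ell b_{\ell,j}\le1/10$ --- is plausible (a stair-step decomposition of the simplex into disjoint sub-boxes, followed by mollification, does it) but is asserted rather than argued, and the simultaneous smoothness and nonnegativity constraints make it less than immediate; moreover you do not need full density, only one good $\Phi$, so this framing makes the step look harder than it is. Second, you say the truncation costs a factor $(\log\log k)^{O(1)}$ and in the next breath that the ratio is $(1+o(1))\log k$; these are inconsistent, and it is the latter that Maynard's concentration estimate actually delivers and that the statement needs. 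Neither issue is fatal, and importing the concentration estimate is consistent with the paper's own treatment, which is a bare citation.
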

This gives us all the ingredients required to prove the main proposition.
\begin{proof}[Proof of Proposition~\ref{prop:main}] Suppose
  $\mathcal{I}_m\subseteq[x/2,x]$ is an interval of length at least
  $\delta\abs{\mathcal{R}_m^{(M)}}\log x$. By lemmas~\ref{lem:probsum}
  and~\ref{lem:k}, we see that any prime $p_0\in\mathcal{R}_m^{(M)}$ with
  $h_{k}x<p_0<U/m-h_{k}x$ has the expected number of times it is chosen
  $\sum_q\mu_{m,q}(p_0)\gg\delta\log k$. By~\eqref{eq:fundlem} we see that when
  $m<Uz^{-1}{(\log_2 x)}^{-2}$ we have $\abs{\mathcal{R}_m^{(M)}}\gg x(\log_2
  x)/\log x$, so the number of primes in $\mathcal{R}_m^{(M)}$ which are not
  considered is $o_k(\abs{\mathcal{R}_m^{(M)}})$. Given $\varepsilon$ and
  $\delta$, we choose $k$ sufficiently large so that
  $\sum_q\mu_{m,q}(p_0)>-\log\varepsilon$. Then the probability that $p_0$ is
  not in any of the chosen residue classes is
\begin{equation}
  \prod_{q\in\mathcal{I}_m\:\text{prime}}(1-\mu_{m,q}(p_0))\leq
  \exp\biggl(-\sum_{q\in\mathcal{I}_m\:\text{prime}}\mu_{m,q}(p_0)\biggr)
  \leq\varepsilon.
\end{equation}
So the expected number of primes in $\mathcal{R}_m^{(M)}$ which are not chosen
is $\varepsilon\abs{\mathcal{R}_m^{(M)}}$. Then there is at least one choice of
residue classes which leaves out at most $\varepsilon\abs{\mathcal{R}_m^{(M)}}$
primes. If we now append to $\mathcal{I}_m$ an interval of length
$2\varepsilon\abs{\mathcal{R}_m^{(M)}}\log x$, for each prime in the appended
integral we can use the residue class of one of the
$\varepsilon\abs{\mathcal{R}_m^{(M)}}$ primes that were left out. This shows
that we can cover $\abs{\mathcal{R}_m^{(M)}}$ using primes from the interval
$\mathcal{I}_m\subseteq [x/2,x]$ which has length
$(\delta+2\varepsilon)\abs{\mathcal{R}_m^{(M)}}\log x$. Since $\delta$ and
$\varepsilon$ were arbitrary, we obtain the result by relabeling.
\end{proof}
\section{Discussion}
The constraint $M\leq\kappa{(\log X)}^{1/5}$ seems rather severe. Zaccagnini
adopts the convention that $G(X;M,a)=X$ if there's no prime below $X$ in the
progression. We adopted our restriction to avoid having to engage with such
degenerate cases. We would have to make similarly severe restrictions in any
case.  Following~\eqref{eq:lem2sum}, had the magnitude of $M$ not been already
restricted, we would have to explicitly impose $(q,M)=1$. In turn, $M$ would
have to be a factor in the moduli when we count the primes
in~\eqref{eq:primecount}, which would necessitate a similarly severe
restriction (as well as further exclusions depending on the divisibility of $M$
by exceptional moduli) anyway for the Bombieri-Vinogradov theorem to be
applicable. With little gain, we opted to make the restriction
upfront and to at least ensure that the gaps we detect are nontrivial,
in the sense that they are blocks of composites that indeed fall between two
primes.

\begin{bibdiv}
  \begin{biblist}
    \bib{Erdos1940Difference}{article}{
      title={The difference of consecutive primes},
      author={Erd\H {o}s, P.},
      journal={Duke Math.\ J.},
      volume={6},
      pages={438--441},
      date={1940},
    }

    \bib{FordGKMT2018Long}{article}{
      title={Long gaps between primes},
      author={Ford, K.},
      author={Green, B.},
      author={Konyagin, S.},
      author={Maynard, J.},
      author={Tao, T.},
      journal={J.\ Amer.\ Math.\ Soc.},
      volume={31},
      number={1},
      pages={65--105},
      year={2018},
    }

    \bib{FordGKT2016Large}{article}{
      title={Large gaps between consecutive prime numbers},
      author={Ford, K.},
      author={Green, B.},
      author={Konyagin, S.},
      author={Tao, T.},
      journal={Ann.\ of Math.},
      pages={935--974},
      year={2016},
    }

    \bib{HalberstamR1974Sieve}{book}{
      title={Sieve methods},
      author={Halberstam, H.},
      author={Richert, H.\ E.},
      year={1974},
      publisher={Academic Press},
    }

    \bib{MaierP1990Unusually}{article}{
      title={Unusually large gaps between consecutive primes},
      author={Maier, H.},
      author={Pomerance, C.},
      journal={Trans.\ Amer.\ Math.\ Soc.},
      volume={322},
      number={1},
      pages={201--237},
      year={1990},
    }

    \bib{Maynard2016Large}{article}{
      title={Large gaps between primes},
      author={Maynard, J.},
      journal={Ann.\ of Math.},
      volume={183},
      number={3},
      pages={915--933},
      year={2016},
    }

    \bib{Pintz1997Very}{article}{
      title={Very large gaps between consecutive primes},
      author={Pintz, J.},
      journal={J. Number Theory},
      volume={63},
      number={2},
      pages={286--301},
      year={1997},
    }

    \bib{Rankin1938Difference}{article}{
      title={The difference between consecutive prime numbers},
      author={Rankin, R. A.},
      journal={J.\ London Math.\ Soc.},
      number={13},
      pages={242--247},
      date={1938},
    }

    \bib{Rankin1963Difference}{article}{
      title={The difference between consecutive prime numbers V},
      author={Rankin, R. A.},
      journal={Proc.\ Edinburgh Math.\ Soc.},
      volume={13},
      number={4},
      pages={331--332},
      year={1963},
    }

    \bib{Schonhage1963Bemerkung}{article}{
      title={Eine Bemerkung zur Konstruktion grosser Primzahll{\"u}cken},
      author={Sch{\"o}nhage, Arnold},
      journal={Arch.\ Math.},
      volume={14},
      number={1},
      pages={29--30},
      year={1963},
      publisher={Springer},
    }

    \bib{Westzynthius1931Uber}{article}{
      title={{\"U}ber die Verteilung der Zahlen, die zu den n ersten Primzahlen teilerfremd sind},
      author={Westzynthius, E.},
      journal={Comm. Phys. Math., Soc. Sci. Fennica},
      volume={5},
      number={25},
      pages={1--37},
      year={1931},
    }

    \bib{Xylouris2011Uber}{thesis}{
      title={{\"U}ber die Nullstellen der Dirichletschen L-Funktionen und die kleinste Primzahl in einer arithmetischen Progression},
      author={Xylouris, T.},
      year={2011},
    }

    \bib{Zaccagnini1992Note}{article}{
      title={A note on large gaps between consecutive primes in arithmetic progressions},
      author={Zaccagnini, A.},
      journal={J. Number Theory},
      volume={42},
      number={1},
      pages={100--102},
      year={1992},
    }
  \end{biblist}
\end{bibdiv}

\end{document}